\theoremstyle{plain}
\newtheorem{theorem}{Theorem}
\newtheorem{lemma}{Lemma}
\theoremstyle{definition}
\newtheorem{remark}{Remark}
\begin{document}

\title[symmetric rank]
{Gaps in the pairs (border rank,\ symmetric rank) for symmetric tensors}
\author{E. Ballico}
\address{Dept. of Mathematics\\
 University of Trento\\
38123 Povo (TN), Italy}
\email{edoardo.ballico@unitn.it}
\thanks{The author was partially supported by MIUR and GNSAGA of INdAM (Italy).}
\subjclass{14N05; 15A69; 15A21}
\keywords{symmetric tensor rank; border rank; homogeneous polynomial; cactus rank}

\begin{abstract}
Fix integers $m \ge 2$, $s\ge 5$ and $d\ge 2s+2$. Here we describe the possible symmetric tensor ranks $\le 2d+s-7$
of all symmetric tensors (or homogeneous degree $d$ polynomials) in $m+1$
variables with border rank $s$.
\end{abstract}

\maketitle

\section{Introduction}\label{S1}

An important practical question concerning symmetric tensors (e.g. in Signal Processing, Statistics
and Data Analysis) is their ``~minimal~'' decomposition as a sum
of pure symmetric tensors (see e.g. \cite{cm}, \cite{ls}, \cite{bcmt}, \cite{cglm}, \cite{lt}, \cite{bgi}, \cite{l} and references
therein). This problem may be translated
into the following problem for homogeneous polynomials in $m+1$ variables: for any degree $d$ homogeneous polynomial
$f\in \mathbb {K}[x_0,\dots ,x_m]$ find the minimal integer $r$ such that $f = \sum _{i=1}^{r} {L_i}^d$, where each $L_i$ is a homogeneous degree $1$ polynomial. The latter problem is translated in the following way
into a problem concerning Veronese embeddings of $\mathbb {P}^m$.

Let $\nu _d: \mathbb {P}^m \to \mathbb {P}^{n_{m,d}}$, $n_{m,d}:= \binom{m+d}{m}-1$, denote the degree $d$ Veronese embedding of $\mathbb
{P}^m$. Set $X_{m,d}:= \nu _d(\mathbb {P}^m)$. We often write $n$ instead of $n_{m,d}$. For any subset or closed subscheme $A$ of a projective space $\mathbb {P}^k$ let
$\langle A\rangle$ denote its linear span. For any integer $s>0$ the $s$-secant variety
$\sigma _s(X_{m,d})$ is the closure in $\mathbb {P}^n$ of the union of all linear spaces spanned
by $s$ points of $X_{m,d}$. Fix $P\in \mathbb {P}^n$. The symmetric rank $sr (P)$ of $P$ is the minimal
cardinality of a finite set $S\subset X_{m,d}$ such that $P\in \langle S\rangle$. The border rank $br (P)$ of $P$
is the minimal integer $s>0$ such that $P\in \sigma _s(X_{m,d})$. There
is another notion of rank of $P$ (the cactus rank $cr (P$ (\cite{bb+}, \cite{bk}), but
we do not need to define it, because in the range $br (P) \le d+1$ we always have $cr (P)=br (P)$
(Remark \ref{v2}). For any fixed $s\ge 2$ one would like to have
the stratification by the symmetric rank of $\sigma _s(X_{m,d})\setminus \sigma _{s-1}(X_{m,d})$, i.e. to know
what are the ranks of the homogeneous degree $d$ polynomials with border rank $s$. This is due to Sylvester if $m=1$, i.e. for binary forms
(\cite{cs}, \cite{lt}, Theorem 4.1, \cite{bgi}). For general $m$ this is known if $s=2,3$ (\cite{bgi}) and if $s=4$ (\cite{bb}).  For all positive integers $a, b$ set
$$\sigma _{a,b}(X_{m,d}):= \{P\in \mathbb {P}^n: br (P)=a, \ sr (P) = b\}.$$
Notice that $\sigma _{a,b}(X_{m,d})= \{P\in \sigma _a(X_{m,d})\setminus \sigma _{a-1}(X_{m,d}): sr (P)=b\}$
if $a\ge 2$, that $\sigma _{a,a}(X_{m,d})$ contains a non-empty
open subset of $\sigma _a(X_{m,d})$ if $\sigma _{a-1}(X_{m,d}) \ne \mathbb {P}^n$ and that $\sigma _{a,b}(X_{m,d}) =\emptyset$ if $b<a$. If either $m=1$ and $s$ is very
low ($s=2,3$ in \cite{bgi}, $s=4$ in \cite{bb}),
then for for fixed $s$ and large $d$
near $s$ several integers are not the symmetric rank of any $P\in \sigma _s(X_{m,d})\setminus \sigma _{s-1}(X_{m,d})$.
Here we show that this is the case for arbitrary $s$, $d$ not too small, but
for low ranks, i.e. if we assume $r\le 2d+s-7$. We prove the following result.

\begin{theorem}\label{i1}
Fix integers $m, s, d, r$ such that $m\ge 2$, $s\ge 5$, $d \ge 2s+2$ and $s \le r\le 2d+s-7$. 

Then $\sigma _{s,r}(X_{m,d}) \ne \emptyset$ if and only if one of the following conditions is satisfied:
\begin{itemize}
\item  $r=s$;
\item $d+2-s \le r \le d+s-2$ and $r+s \equiv d \pmod{2}$;
\item $2d+2-s \le r \le 2d+s-7$.
\end{itemize}\end{theorem}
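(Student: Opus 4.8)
The plan is to establish the two implications separately, the common thread being that a point $P\in\sigma_s(X_{m,d})\setminus\sigma_{s-1}(X_{m,d})$ is analysed through a zero-dimensional scheme $Z\subset\mathbb{P}^m$ of degree $s$ with $P\in\langle\nu_d(Z)\rangle$; such a $Z$ exists because in our range $cr(P)=br(P)=s$ (Remark \ref{v2}). The symmetric rank of $P$ is governed by the \emph{type} of $Z$: its connected components, which of them are reduced, and for the non-reduced ones the smallest rational normal curve $\nu_d(Y)$, with $Y$ a line or a conic, whose span contains that component.

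First I would record the two quantitative inputs. (i) Sylvester's classification of binary forms: if $W\subset L$ is a length-$\ell$ curvilinear scheme on a line $L$, then $\langle\nu_d(W)\rangle$ lies on the span of the rational normal curve $\nu_d(L)$, a point of $\langle\nu_d(W)\rangle$ of border rank exactly $\ell$ has symmetric rank $\ell$ or $d+2-\ell$, and the generic point of $\langle\nu_d(W)\rangle$ has rank $d+2-\ell$; the analogue on the degree-$2d$ curve $\nu_d(C)$ for a conic $C$ produces contributions of the form $2d+2-k$ (up to the subtleties discussed below). (ii) A Cayley--Bacharach / residuation lower bound for $sr$: if $P\in\langle\nu_d(S)\rangle$ with $S$ reduced and $S$ does not contain the reduction of a scheme evincing $br(P)$, then $S\cup Z$ fails to impose independent conditions on the degree-$d$ forms, and subtracting the line or conic through the relevant component of $Z$ yields a lower bound for $|S|$. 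Tool (ii) is where $d\ge 2s+2$ and $r\le 2d+s-7$ enter: a scheme of degree $\le 2d+1$ on a conic imposes independent conditions on the degree-$d$ forms, and only such schemes are met under the stated bound on $r$ — beyond it one would be forced to residuate with cubics, which is why the third interval stops at $2d+s-7$.

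For the ``only if'' implication I would run a case analysis on the type of $Z$ (using that in this range the minimal scheme is, up to the relevant invariants, a disjoint union of reduced points and curvilinear schemes each lying on a line or a conic). If $Z$ is reduced, then $sr(P)=s$. If $Z$ has exactly one non-reduced component and it is contained in a line, combining (i) with additivity of rank for general configurations gives $sr(P)=(d+2-\ell)+(s-\ell)$, sweeping out $d+2-s\le r\le d+s-2$ with $r\equiv d+s\pmod 2$ as $\ell$ runs from $2$ to $s$. If some component of $Z$ is curvilinear but not contained in a line, it lies on a conic (the higher-degree curves would force $sr(P)>2d+s-7$), its contribution to the rank is of order $2d$, and one checks, discarding whatever exceeds $2d+s-7$, that the possible totals fill $2d+2-s\le r\le 2d+s-7$; here (ii) is used to rule out values below $2d+2-s$ and values not of the listed shape. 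The delicate point is that a single curvilinear component on a conic can have generic rank $2d+2-k$ \emph{or} an adjacent value, depending on whether its tangent line meets it with multiplicity $2$ or larger and on the position of the conic relative to the linear forms involved; tracking these subtypes is what makes \emph{both} residues mod $2$ occur in the third range, whereas only one residue survives in the second.

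For the ``if'' implication I would produce $Z$, hence $P$, by hand. For $r=s$ take a generic point of $\sigma_s(X_{m,d})$ (non-empty and of rank $s$ since $\sigma_{s-1}(X_{m,d})\neq\mathbb{P}^n$ for $d\ge 2s+2$). For the second interval, write $r=d+s+2-2\ell$ with $2\le\ell\le s$ and set $P=P_1+P_2$, where $W$ is a length-$\ell$ jet on a general line, $P_1\in\langle\nu_d(W)\rangle$ is a ``tangential'' point of border rank $\ell$ and rank $d+2-\ell$ (which exists by (i)), and $P_2$ is general in the span of $s-\ell$ general points; generality gives $br(P)=s$ and additivity gives $sr(P)=r$. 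For the third interval I would take $Z$ to be a curvilinear scheme of suitable length on a general smooth conic, possibly together with extra general reduced points, adjusting its length, its contact type at the support, and the position of the conic so that a generic $P\in\langle\nu_d(Z)\rangle$ has $sr(P)=r$; the inequality $sr(P)\le r$ comes from an explicit decomposition read off the Veronese image of the conic, and $sr(P)\ge r$ from the estimate (ii). I expect the crux of the whole argument to be exactly this last step: computing the exact symmetric rank — as opposed to the rank with respect to the conic's own rational normal curve — of a generic point in the span of a curvilinear scheme on a conic, uniformly over all the subtypes required to realise both parities of the third interval.
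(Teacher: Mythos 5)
Your overall architecture (minimal scheme $A$ evincing $br(P)$, a reduced $B$ evincing $sr(P)$, the $h^1$ non-vanishing for $A\cup B$, residuation by a line or conic, Sylvester on the resulting rational normal curve) is the paper's architecture, and your tool (ii) is essentially Lemmas \ref{v1}, \ref{c0} and \ref{v4}. But there are two genuine gaps. First, in the ``only if'' direction you start from a classification of the border-rank scheme $Z$ as ``a disjoint union of reduced points and curvilinear schemes each lying on a line or a conic.'' No such classification is available: for $s\ge 5$ a smoothable Gorenstein scheme of degree $s$ need not be curvilinear nor lie on a conic, and the paper never classifies $A$. The logic runs the other way: one applies Lemma \ref{c0} to the \emph{union} $A\cup B$ (whose degree is controlled precisely by the hypothesis $r\le 2d+s-7$), obtains a line or conic $D$ with $\deg(D\cap(A\cup B))\ge \deg(D)\,d+2$, and only then does Lemma \ref{v4} force every unreduced component of $A$ into $D$. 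Starting from the classification of $A$ assumes the conclusion. The parity obstruction in the second interval (step (b) of the paper: $s+r=2\sharp(E)+sr(O)+br(O)\equiv d\pmod 2$) and the emptiness of the gap $d+s-1\le r\le 2d+1-s$ (step (c)) then fall out of this residuation; your sketch points at them but does not actually derive them.

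Second, and more seriously, your mechanism for realising \emph{both} parities in the third interval fails. For a connected curvilinear scheme $A_1$ of degree $b$ on a \emph{smooth} conic $C$, Sylvester applied to the degree-$2d$ rational normal curve $\nu_d(C)$ gives $r_{\nu_d(C)}(O)=2d+2-b$ for the relevant points $O$ --- no ``adjacent value'' is available by varying the contact type of the tangent line, because the rank with respect to a rational normal curve of degree $2d$ is exactly $b$ or $2d+2-b$ and nothing else. Adding $s-b$ general points yields $r=2d+2+s-2b$, so $r+s$ is always even; the smooth conic can never produce $r+s$ odd. The paper's construction for the odd case (Lemmas \ref{o1}--\ref{o4}) uses a \emph{reducible} conic $L_1\cup L_2$ with one non-reduced component supported at the node and one at a smooth point, which shifts $br+sr$ from $2d+2$ to $2d+3$; moreover two distinct such configurations are needed according to the parity of $(2d+3+s-r)/2$. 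This is the missing idea, and without it the existence half of the third bullet is only proved for one residue class.
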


If $\sigma _s(X_{m,d}) \supsetneq \sigma _{s-1}(X_{m,d})$, then a general $P\in \sigma _s(X_{m,d})$ satisfies $br (P) = s = sr (P)$. Hence in the set-up of Theorem \ref{i1}
only the pairs $(s,r)$ with $2\le s < r$ need to be checked. The statement of Theorem \ref{i1} is of the form ``~if and only if~''. However, the proofs of both
implications use similar tools. The key technical tool used in almost all our lemmas is an inductive method to handle cohomology groups (vanishing and non-vanishing)
often called the Horace Method. The starting observation is that for every $s$ as in Theorem \ref{i1} there is a zero-dimensional
scheme $A\subset \mathbb {P}^m$ such that $\deg (A)=s$ and $P\in \langle \nu _d(A)\rangle$ (Remark \ref{v2}). Moreover if $sr (P) > br (P)$, then there
is a finite set $B\subset \mathbb {P}^m$ such that $\sharp (B) = sr (P)$, $P\in \langle \nu _d(A)\rangle \cap \langle \nu _d(B)\rangle$ and the scheme $A\cup B$
has two strong properties: $h^1(\mathcal {I}_{A\cup B}(d)) >0$ and there is a line or a conic (say $D$) such that $\deg ((A\cup B)\cap D) \ge \deg (D)\cdot d+2$ and $B\setminus B\cap D = A\setminus A\cap D$ (Lemmas \ref{c0} and \ref{v4}). In this way it is easy to get the non-existence part of Theorem \ref{i1}. The existence part in the range $s+r\le d+2$ (i.e. with $r=d+2-s$) is done
taking $A$ and $B$ contained in a line $L$. To cover the case $r+s \equiv d \pmod{2}$ and $d+2-s < r \le d+s-2$ we take $A = (A\cap L) \sqcup E$ and $B = (B\cap L)\sqcup 
E$ with $E\subset \mathbb {P}^m\setminus L$, $\sharp (E) = (s+r-d-2)/2$, $B\cap A\cap L=\emptyset$ and $\deg (A\cap L) + \sharp (B\cap L) =d+2$. To cover the case
in which $r+s$ is even and $2d+2 \le r+s$ we use a smooth conic $C \subset \mathbb {P}^m$ and take $A = (A\cap C)\sqcup E$, $B = (B\cap C) \sqcup E$ with $A\cap B \cap C = \emptyset$, $\deg (A\cap C) + \sharp (B\cap C) =2d+2$ and $\sharp (E) = (r+s-2d-2)/2$. To cover the case $2d+3-s \le r \le 2d+s-7$ and $r+s$ odd
we use a reducible conic instead of $C$; we need two different constructions according to the parity of the integer $(2d+3+s-r)/2 $ (Lemmas \ref{o3} and \ref{o4}).

In all cases the delicate part is the proof that there is no finite set $S\subset \mathbb {P}^m$ such that $P\in \langle \nu _d(S)\rangle$
and $\sharp (S)<\sharp (B) =r$. In all cases we again use Lemmas \ref{c0} and \ref{v4}.

We work over an algebraically closed base field $\mathbb {K}$ such that $\mbox{char}(\mathbb {K})=0$.

\section{The proof}

For any sheaf $\mathcal {F}$ on $\mathbb {P}^m$ and any integer $i\ge 0$ set $h^i(\mathcal {F}) :=
\dim (H^i(\mathbb {P}^m,\mathcal {F}))$. For any scheme $X$, any effective Cartier divisor $D$
of $X$ and any closed subscheme $Z\subset X$ let $\mbox{Res}_D(Z)$ denote the residual scheme
of $Z$ with respect to $D$, i.e. the closed subscheme of $X$ with $\mathcal {I}_Z:\mathcal {I}_D$
as its ideal sheaf. For any $R\in \mbox{Pic}(X)$ we have the following exact sequence
of coherent sheaves (called the {\it residual exact sequence}):
\begin{equation}\label{eqc0}
0 \to \mathcal {I}_{\mbox{Res}_D(Z)}\otimes R(-D) \to \mathcal {I}_Z\otimes R\to \mathcal {I}_{D\cap Z,D}\otimes (R\vert D)\to 0
\end{equation}

We need the following lemma (see \cite{c2} for the case in which the scheme $Z$ is reduced,
\cite{bgi}, Lemma 34, for the case $z\le 2d+1$,
and \cite{ep} for a strong tool to prove much more in $\mathbb {P}^2$).

Fix positive integers $m, d$, any $P\in \mathbb {P}^{n_{m,d}}$ and any finite set $B\subset \mathbb {P}^m$. We say that $B$ evinces $sr (P)$ if $P\in \langle \nu _d(B)\rangle$
and $\sharp (B) =sr (P)$.

\begin{lemma}\label{c0}
Fix integers $m, d, z$ such that $m\ge 2$ and $0 < z < 3d$. Let $Z\subset \mathbb {P}^m$ be a zero-dimensional scheme
such that $\deg (Z)=z$. If $m>2$, then assume $\deg (Z) -\deg (Z_{red}) \le d$. We have $h^1(\mathcal {I}_Z(d)) > 0$ if and only if either
there is a line $L\subset \mathbb {P}^m$ such that $\deg (L\cap Z)\ge d+2$ or there
is a conic $T\subset \mathbb {P}^m$ such that $\deg (T\cap Z) \ge 2d+2$.
\end{lemma}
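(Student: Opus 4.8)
The plan is to prove Lemma~\ref{c0} by induction on $d$, using the residual exact sequence \eqref{eqc0} to peel off lines and conics on which $Z$ concentrates too much degree. The ``if'' direction is immediate: if $L$ is a line with $\deg(L\cap Z)\ge d+2$, then $h^1(L,\mathcal{I}_{L\cap Z,L}(d))>0$ because a degree-$d$ form on $L\cong\mathbb{P}^1$ cannot contain a scheme of length $\ge d+2$; and $h^1(\mathcal{I}_Z(d))\ge h^1(L,\mathcal{I}_{L\cap Z,L}(d))$ via \eqref{eqc0} with $D=L$ (since $h^1$ of the twisted residual ideal on $\mathbb{P}^m$ surjects appropriately, or rather the relevant piece of the long exact sequence gives the inequality). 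The conic case is analogous, using that a degree-$d$ form restricted to a conic $T$ lives in $H^0(T,\mathcal{O}_T(d))$ which has dimension $2d+1$, so a length-$(2d+2)$ subscheme of $T$ forces $h^1(T,\mathcal{I}_{T\cap Z,T}(d))>0$; here one should be slightly careful when $T$ is a double line, but the length bound $2d+2$ still does the job.

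For the ``only if'' direction, I would induct on $d$, the base cases being small $d$ (where $z<3d$ is a strong constraint) or reducing to the classical statement of \cite{c2} when $Z$ is reduced and to \cite{bgi}, Lemma~34, when $z\le 2d+1$. Assume $h^1(\mathcal{I}_Z(d))>0$ and that no line meets $Z$ in degree $\ge d+2$ and no conic in degree $\ge 2d+2$; we derive a contradiction. The strategy is a Horace-type differentiation: choose a line $L$ maximizing $\deg(L\cap Z)$. Write $w:=\deg(L\cap Z)\le d+1$, and set $Z':=\mathrm{Res}_L(Z)$, so $\deg(Z')=z-w$. From \eqref{eqc0} with $R=\mathcal{O}(d)$, $D=L$, we get
\[
h^1(\mathcal{I}_Z(d)) \le h^1(\mathcal{I}_{Z'}(d-1)) + h^1(L,\mathcal{I}_{L\cap Z,L}(d)).
\]
Since $w\le d+1$ the last term vanishes, so $h^1(\mathcal{I}_{Z'}(d-1))>0$. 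Now $\deg(Z')=z-w<3d-w$; one checks $z-w<3(d-1)$ except possibly when $w$ is very small, i.e.\ when $Z$ spans enough that every line meets it in low degree — and that case is precisely where a conic must be invoked. So I would split: if some line is ``heavy'' ($w$ large, but $\le d+1$), reduce via $L$ and apply induction to $Z'$ in degree $d-1$ to locate a line $L''$ with $\deg(L''\cap Z')\ge d+1$ or a conic; then recombine $L\cup L''$ (a conic!) or $L$ itself to contradict maximality or the conic hypothesis on $Z$. If instead every line is ``light'', one runs the same argument residuating a conic $T$ of maximal $\deg(T\cap Z)$: with $D=T$ and $R=\mathcal{O}(d)$, the residual twist is $\mathcal{O}(d-2)$, and $h^1(\mathcal{I}_{\mathrm{Res}_T(Z)}(d-2))>0$ pushes the induction down by two steps, after which one must reassemble the resulting low-degree obstruction (line or conic in degree $d-2$) together with $T$ into a line or conic contradicting the hypotheses in degree $d$.

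The main obstacle is the bookkeeping in the inductive step: guaranteeing that the residual scheme $Z'$ still satisfies the hypotheses of the lemma in the lower degree (in particular $\deg(Z')<3(d-1)$ and, for $m>2$, the bound $\deg(Z')-\deg(Z'_{red})\le d-1$ on the non-reduced part), and then correctly \emph{recombining} the line or conic produced in degree $d-1$ or $d-2$ with the line $L$ or conic $T$ we residuated, to manufacture an object violating the original hypothesis. The delicate point is that a line in degree $d-1$ for $Z'$ and the residuated line $L$ may or may not coincide, and if they don't, their union is a conic — so one must track whether $\deg(T\cap Z)$ for that recombined conic genuinely reaches $2d+2$, using $\deg(L\cap Z)$, the length of $Z'$ along the new line, and the behaviour of lengths under residuation along $L$. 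I expect the case analysis to fork on whether $\deg(L\cap Z)\ge d+1$ or $\le d$, and within the conic case on whether $T$ is smooth, a pair of lines, or a double line; the inequality $d\ge$ (something linear in the relevant lengths) coming from $z<3d$ is what keeps all the arithmetic consistent, and the restriction $\deg(Z)-\deg(Z_{red})\le d$ when $m>2$ is exactly what prevents a fat point from evading the line/conic dichotomy.
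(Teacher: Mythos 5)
Your proposal has two genuine gaps, one structural and one that you yourself flag as ``the main obstacle'' without resolving it. First, the residual exact sequence \eqref{eqc0} requires $D$ to be an effective Cartier divisor of the ambient scheme, i.e.\ a hypersurface of $\mathbb{P}^m$. For $m\ge 3$ a line or a conic is not a divisor, so the inequality $h^1(\mathcal{I}_Z(d))\le h^1(\mathcal{I}_{\mbox{Res}_L(Z)}(d-1))+h^1(L,\mathcal{I}_{L\cap Z,L}(d))$ on which your whole induction on $d$ rests is simply not available outside the plane. (The ``if'' direction survives, but via the elementary surjectivity of $H^0(Z,\mathcal{O}_Z(d))\to H^0(W,\mathcal{O}_W(d))$ for zero-dimensional $W\subseteq Z$, not via \eqref{eqc0}.) The paper instead inducts on $m$, residuating along \emph{hyperplanes} chosen to meet $Z$ maximally, and only in the plane does it residuate along lines and conics; the planar case is not handled by induction on $d$ at all but by quoting Ellia--Peskine \cite{ep}. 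Note also that your proposed base cases do not cover the new content: \cite{bgi}, Lemma 34, handles $z\le 2d+1$ and \cite{c2} the reduced case, whereas the range Lemma \ref{c0} adds is precisely $2d+2\le z<3d$ with $Z$ possibly non-reduced.

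Second, even for $m=2$ the recombination step fails in an essential case. Suppose you residuate a maximal line $L$ with $w:=\deg(L\cap Z)\le d+1$ and the inductive hypothesis applied to $Z':=\mbox{Res}_L(Z)$ in degree $d-1$ returns a \emph{conic} $T'$ with $\deg(T'\cap Z')\ge 2(d-1)+2=2d$. Since $Z'\subseteq Z$ this only gives $\deg(T'\cap Z)\ge 2d$, which does not violate the hypotheses in degree $d$, and $T'\cup L$ is a cubic, not a conic, so there is nothing to recombine into. The same failure occurs after residuating a conic $T$: a line or conic obstructing $h^1(\mathcal{I}_{\mbox{Res}_T(Z)}(d-2))$ unions with $T$ into a cubic or a quartic. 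The sub-case that does close is the easy one: if the output in degree $d-1$ is a line $L''$ with $\deg(L''\cap Z')\ge d+1$, then maximality of $L$ forces $w=d+1$ and the conic $L\cup L''$ (or the double line if $L''=L$) meets $Z$ in degree $\ge 2d+2$. The conic output is exactly where a single-step induction on $d$ breaks down, and it is why the paper organizes the argument around induction on $m$ plus the planar result of \cite{ep} rather than around induction on $d$.
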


\begin{proof}
Since $Z$ is zero-dimensional, the restriction map
$H^0(Z,\mathcal {O}_Z(d)) \to H^0(W,\mathcal {O}_W(d))$ is
surjective for any $W\subseteq Z$. Hence for any $W\subseteq Z$ we have
$h^1(\mathcal {I}_W(d)) \le h^1(\mathcal {I}_Z(d))$. Since $h^0(L,\mathcal {O}_L(d)) =d+1$
for any line $L$ and $h^0(T,\mathcal {O}_T(d)) =2d+1$ for any conic $T$, we get
the ``~if~'' part.
Now assume $h^1(\mathcal {I}_Z(d)) > 0$. 

\quad (a) First assume $m=2$. Apply \cite{ep}, Remarques (i) at page 116.

\quad (b) Now assume $m\ge 3$. We use induction on $m$. Let $H_1\subset \mathbb {P}^m$ be
a hyperplane such that $\deg (H_1\cap Z)$ is maximal.
Set $Z_0:=Z$, $Z_1:= \mbox{Res}_{H_1}(Z_0)$ and $w_1:=
\deg (Z_0\cap H_1)$. As in the
proof of \cite{bb}, Proposition 12, we define recursively the hyperplanes $H_i\subset \mathbb {P}^m$, $i\ge 2$,
the schemes $Z_i\subseteq Z_{i-1}$, and the integers $w_i$, $i\ge 1$, in the following way. Let $H_i$ be any hyperplane such that
$\deg (Z_{i-1}\cap H_i)$ is maximal. Set $Z_i:= \mbox{Res}_{H_i}(Z_{i-1})$ and $w_i:= \deg (H_i\cap Z_{i-1})$.
Any zero-dimensional scheme $F\subset \mathbb {P}^m$ with $\deg (F) \le m$
is contained in a hyperplane. Hence if $w_i \le m-1$, then $w_{i+1} = 0$ and $Z_i = \emptyset$. Since $z<3d$, we get $w_i=0$
for all $i\ge d$ and $Z_d =\emptyset$. For any integer $i > 0$ the residual
sequence (\ref{eqc0})
gives the following exact sequence:
\begin{equation}\label{eqc1}
0 \to \mathcal {I}_{Z_i}(d-i) \to \mathcal {I}_{Z_{i-1}}(d-i+1) \to \mathcal {I}_{H_i\cap Z_{i-1},H_i}(d-i+1) \to 0
\end{equation}
Since $h^1(\mathcal {I}_Z(d)) >0$ and $Z_d =\emptyset$, (\ref{eqc1}) gives
the existence of an integer $x$ such that $1\le x \le d-1$ and $h^1(H_x,\mathcal {I}_{H_x\cap Z_{x-1},H_x}(d-x+1)) >0$. We call $e$ the minimal such an integer $x$. First assume $e=1$, i.e. assume
$h^1(H_1,\mathcal {I}_{Z\cap H_1}(d)) >0$. Since $\deg (Z\cap H_1) \le \deg (Z) < 3d$, the inductive assumption on $m$ gives that either there is a line $L\subset H_1$ such that $\deg (L\cap Z)\ge 2$ or there
is a conic $T\subset H_1$ such that $\deg (T\cap Z) \ge 2d+2$. From now on we assume $e\ge 2$. First assume $w_e \ge 2(d-e+1)+2$.
Since $w_i \ge w_e$ for all $i<e$, we get $z \ge 2e(d-e+1) +2e$. 
Since $2 \le e \le d-1$ and $z<3d$, we get a contradiction. Hence
$w_e \le 2(d-e+1)+1$. Since
$h^1(H_e,\mathcal {I}_{H_e\cap Z_{e-1},H_e}(d-e+1)) >0$ and $w_e\le 2(d-e+1)+1$, there is a line $L
\subset H_e$ such that $\deg (L\cap Z_{e-1}) \ge d-e+3$
(\cite{bgi}, Lemma 34). Since $Z_{e-1}\ne \emptyset$,
$Z_{e-2}$ spans $\mathbb {P}^m$. Hence there is a hyperplane
$M\subset \mathbb {P}^m$ such that $M\supset L$ and $\deg (M\cap Z_{e-2}) \ge
\deg (Z_{e-2}\cap L)+m-2 \ge d-e+m+1$. Hence $w_i\ge d-e+m+1$ for all $i<e$.
Hence $z \ge e(d-e+3)+(e-1)(m-2)$. 

First assume $e\ge 3$. Since $3d >
z \ge e(d-e+3)+(e-1)(m-2)$ and $3\le e \le d$, we get a contradiction.

Now assume $e=2$.  We have $\deg (L\cap Z)\ge d+1$. If $\deg (L\cap Z)\ge d+2$, then
we are done. Hence we may assume $\deg (L\cap Z)=d+1$. Set $W_0:= Z$. Let $M_1\subset \mathbb {P}^m$ be a hyperplane
containing $L$ and with $m_1:= \deg (M_1\cap W_0)$ maximal among the hyperplanes
containing $L$. We define recursively the hyperplanes $M_i\subset \mathbb {P}^m$, $i\ge 2$,
the schemes $W_i\subseteq W_{i-1}$, and the integers $m_i$, $i\ge 1$, in the following way. Let $M_i$ be any hyperplane such that
$\deg (W_{i-1}\cap M_i)$ is maximal. Set $W_i:= \mbox{Res}_{M_i}(W_{i-1})$ and $m_i:= \deg (H_i\cap W_{i-1})$. We have $m_i\le m_{i-1}$ for all $i$, $m_1\ge \deg (L\cap Z)+m-2
\ge d+m-2$ and $m_i=0$ if $m_{i-1} \le m-1$. As above there is a minimal integer $f$ such
$1 \le f \le d-1$ and $h^1(M_f,\mathcal {I}_{M_f\cap W_{f-1},M_f}(d-f+1))>0$.
As above we get a contradiction, unless $f=2$. 
Assume $f=2$. Since $m_2\le z/2 \le 2(d-1)+1$, there is a line $D\subset M_2$
such that $\deg (D\cap W_1) \ge d+1$. Let $E$ be any connected component
of $Z$. If $E$ is reduced, then $L\cap \mbox{Res}_{M_1}(E) =\emptyset$, because
$M_1\supset L$.
If $E$ is not reduced, then $\deg (M_1\cap \mbox{Res}_{M_1}(E)) \le \deg (E\cap M_1)$,
because $\mbox{Res}_{M_1}(E) \subseteq E$. Since $\deg (Z)-\deg (Z_{red}) \le d$, we get
$D\ne L$. Assume for the moment that either $D\cap L \ne \emptyset$ or $m\ge 4$, i.e. assume
the existence of a hyperplane of
$\mathbb {P}^m$ containing $D\cup L$. Hence $w_1\ge 2d+1$. Hence
$\deg (Z_1) \le \deg (W_1) \le d$. Hence $h^1(\mathcal {I}_{Z_1}(d-1)) =0$. Hence $h^1(H_2,\mathcal {I}_{Z_1\cap H_2}(d-1))=0$, contradicting the assumption $e=2$. Now assume $m=3$ and $D\cap L=\emptyset$. We may also assume $\deg (L\cap Z) = \deg (D\cap Z) = d+1$. Let $N\subset \mathbb {P}^3$
be a general quadric surface containing $D\cup L$. The quadric surface $N$ is smooth. Since $\deg (\mbox{Res}_N(Z)) \le z-2d-2 \le d-1$,
we have $h^1(\mathcal {I}_{\mbox{Res}_N(Z)}(d-2))=0$. Hence the exact sequence
\begin{equation}\label{eqc2}
0 \to \mathcal {I}_{\mbox{Res}_N(Z)}(d-2) \to \mathcal {I}_Z(d) \to \mathcal {I}_{Z\cap N,N}(d) \to 0
\end{equation}
gives $h^1(N,\mathcal {I}_{Z\cap N,N}(d))>0$. Since $D\cap L=\emptyset$, $D$ and $L$ belong
to the same ruling of $N$, say $D, L\in \vert \mathcal {O}_N(1,0)\vert$. Since
$\deg (Z\cap L) =\deg (Z\cap D)=d+1$, we have $h^i(N,\mathcal {I}_{Z\cap N,N}(d,d))
= h^i(N,\mathcal {I}_{\mbox{Res}_{D\cup L}(Z\cap N),N}(d-2,d))$, $i=0,1$.
Since $\deg (\mbox{Res}_{D\cup L}(Z\cap N)) = \deg (Z\cap N) -2d-2 \le d-1$,
we have $h^1(N,\mathcal {I}_{\mbox{Res}_{D\cup L}(Z\cap N),N}(d-2,d)) =0$, a contradiction.
\end{proof}

We recall the following result (\cite{bb1}, Lemma 1).

\begin{lemma}\label{v1} Fix $P\in \mathbb{P}^n$. Assume
the existence of zero-dimensional schemes $A, B\subset \mathbb {P}^m$ such
that $A\ne B$, $P\in \langle \nu _d(A)\rangle \cap \langle \nu _d(B)\rangle$, $P\notin \langle \nu _d(A')\rangle$ for any $A'\subsetneq A$ and $P\notin \langle \nu _d(B')\rangle$ for any $B'\subsetneq B$.
Then $h^1(\mathcal {I}_{A\cup B}(d)) >0$.
\end{lemma}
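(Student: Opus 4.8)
The plan is to prove the contrapositive: assuming $h^1(\mathcal{I}_{A\cup B}(d))=0$ I will produce a proper subscheme of $A$ (or of $B$) whose span through $\nu_d$ still contains $P$, contradicting one of the minimality hypotheses. Throughout, identify $\mathbb{P}^n$ with $\mathbb{P}(H^0(\mathbb{P}^m,\mathcal{O}_{\mathbb{P}^m}(d))^\vee)$; for a zero-dimensional scheme $Z\subset\mathbb{P}^m$ the restriction map $H^0(\mathcal{O}_{\mathbb{P}^m}(d))\to H^0(\mathcal{O}_Z(d))$ is surjective, so $\langle\nu_d(Z)\rangle$ is exactly the locus cut out in $\mathbb{P}^n$ by the linear forms coming from $H^0(\mathcal{I}_Z(d))$; equivalently $\langle\nu_d(Z)\rangle=\mathbb{P}\bigl(H^0(\mathcal{I}_Z(d))^{\perp}\bigr)$, where $(\,\cdot\,)^{\perp}$ is the annihilator in $H^0(\mathcal{O}_{\mathbb{P}^m}(d))^\vee$. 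Thus $P\in\langle\nu_d(Z)\rangle$ if and only if the functional representing $P$ kills every degree $d$ form through $Z$.

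Next I would use the scheme-theoretic identities $\mathcal{I}_{A\cup B}=\mathcal{I}_A\cap\mathcal{I}_B$ and $\mathcal{I}_{A\cap B}=\mathcal{I}_A+\mathcal{I}_B$ (these are the definitions of the union and intersection schemes), which give the exact Mayer--Vietoris sequence
\[
0\to \mathcal{I}_{A\cup B}\to \mathcal{I}_A\oplus\mathcal{I}_B\to \mathcal{I}_{A\cap B}\to 0,
\]
with maps $x\mapsto(x,x)$ and $(x,y)\mapsto x-y$. Twisting by $\mathcal{O}_{\mathbb{P}^m}(d)$ and taking cohomology, the image of $H^0(\mathcal{I}_A(d))\oplus H^0(\mathcal{I}_B(d))$ in $H^0(\mathcal{I}_{A\cap B}(d))$ is precisely the subspace $H^0(\mathcal{I}_A(d))+H^0(\mathcal{I}_B(d))$, and its cokernel injects into $H^1(\mathcal{I}_{A\cup B}(d))$. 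Hence the assumption $h^1(\mathcal{I}_{A\cup B}(d))=0$ forces $H^0(\mathcal{I}_A(d))+H^0(\mathcal{I}_B(d))=H^0(\mathcal{I}_{A\cap B}(d))$. Taking annihilators (which interchanges sums and intersections of subspaces) this reads $\langle\nu_d(A)\rangle\cap\langle\nu_d(B)\rangle=\langle\nu_d(A\cap B)\rangle$.

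Now I bring in minimality. Since $P\in\langle\nu_d(A)\rangle$ while $P\notin\langle\nu_d(B')\rangle$ for every $B'\subsetneq B$, we cannot have $A\subsetneq B$; symmetrically $B\subsetneq A$ is impossible. Together with $A\neq B$ this shows $A\cap B\subsetneq A$ as schemes: if $\mathcal{I}_A+\mathcal{I}_B=\mathcal{I}_A$ then $\mathcal{I}_B\subseteq\mathcal{I}_A$, i.e.\ $A\subseteq B$, which is excluded. But then, still assuming $h^1(\mathcal{I}_{A\cup B}(d))=0$, we would get $P\in\langle\nu_d(A)\rangle\cap\langle\nu_d(B)\rangle=\langle\nu_d(A\cap B)\rangle$ with $A\cap B\subsetneq A$, contradicting $P\notin\langle\nu_d(A')\rangle$ for $A'\subsetneq A$. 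Therefore $h^1(\mathcal{I}_{A\cup B}(d))>0$.

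The only step that needs care is the identification $\langle\nu_d(A)\rangle\cap\langle\nu_d(B)\rangle=\langle\nu_d(A\cap B)\rangle$ under the $h^1$-vanishing: one must know that $\langle\nu_d(Z)\rangle$ is genuinely defined by the forms of $H^0(\mathcal{I}_Z(d))$ (this is the surjectivity of $H^0(\mathcal{O}_{\mathbb{P}^m}(d))\to H^0(\mathcal{O}_Z(d))$ for zero-dimensional $Z$) and that passing to annihilators turns the sum of the two spaces of sections into the intersection of the two spans. Everything else is formal diagram-chasing. Note that the argument uses neither reducedness of $A,B$ nor any numerical hypothesis, so it holds in the stated generality.
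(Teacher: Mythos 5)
Your argument is correct. Note first that the paper itself offers no proof of this lemma: it is quoted verbatim from Lemma 1 of \cite{bb1}. Your Mayer--Vietoris proof is a clean, self-contained substitute and runs on the same mechanism as the cited source: there the identity $\langle\nu_d(A)\rangle\cap\langle\nu_d(B)\rangle=\langle\nu_d(A\cap B)\rangle$ under the vanishing of $h^1(\mathcal{I}_{A\cup B}(d))$ is extracted from Grassmann's formula applied to the dimensions $\dim\langle\nu_d(Z)\rangle=\deg(Z)-1$ for $Z=A,B,A\cap B,A\cup B$ (all of which follow once $h^1(\mathcal{I}_{A\cup B}(d))=0$), whereas your long exact sequence packages the same computation without counting dimensions. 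The deduction that $A\cap B\subsetneq A$ from the two minimality hypotheses together with $A\ne B$, and the resulting contradiction, are exactly right, including the degenerate case $A\cap B=\emptyset$, where $\langle\nu_d(\emptyset)\rangle=\emptyset$.

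One correction to your justifications, though it does not affect the validity of the proof: the restriction map $H^0(\mathcal{O}_{\mathbb{P}^m}(d))\to H^0(\mathcal{O}_Z(d))$ is \emph{not} surjective for every zero-dimensional $Z$ --- its failure to be surjective is precisely the condition $h^1(\mathcal{I}_Z(d))>0$ that the whole lemma is about (e.g.\ $d+2$ collinear points). Fortunately you never actually need that surjectivity: the identification $\langle\nu_d(Z)\rangle=\mathbb{P}\bigl(H^0(\mathcal{I}_Z(d))^{\perp}\bigr)$ holds unconditionally, because the linear span of a closed subscheme is by definition the intersection of the hyperplanes containing it, and a hyperplane $H_f$ contains the scheme $\nu_d(Z)$ exactly when $f\in H^0(\mathcal{I}_Z(d))$; this uses only that $\nu_d$ is an embedding with $\nu_d^{*}\mathcal{O}_{\mathbb{P}^n}(1)\cong\mathcal{O}_{\mathbb{P}^m}(d)$ inducing an isomorphism on global sections. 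With that justification repaired, the proof stands in the stated generality.
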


\begin{remark}\label{v2}
Fix integers $m \ge 1$, $d \ge 2$ and $P\in \mathbb {P}^n$ such that $br (P) \le d+1$. By \cite{bgl},
Lemma 2.1.5 and Lemma 2.4.4, there is a smoothable 
zero-dimensional and Gorenstein scheme $A\subset \mathbb {P}^m$ such that $\deg (A) =br (P)$,  $P\in \langle \nu _d(A) \rangle$ and $P\notin \langle \nu _d(A') \rangle$ for any $A'\subsetneq A$. We will say that $A$ evinces $br (P)$. In this range the smoothable rank and the border rank coincide. Now
assume $br (P) \le (d+1)/2$. Using Lemma \ref{v1} and the inequality $2s \le d+1$ we get
that $A$ is the unique zero-dimensional scheme $E\subset \mathbb {P}^m$ such that
$P\in \langle \nu _d(E)\rangle$ and $\deg (E)\le s$. The uniqueness
of $A$ implies that $A$ also evinces the cactus rank $cr (P)$ of $P$. In particular $cr (P) =br (P)$
if $br (P) \le (d+1)/2$.
\end{remark}

\begin{lemma}\label{v3}
Fix a proper linear subspace $L$ of $\mathbb {P}^m$, an integer $d \ge 2$ and a finite
set $E\subset \mathbb {P}^m\setminus L$ such that $\sharp (E)\le d$. Then
$\dim (\langle \nu _d(E\cup L)\rangle ) =\dim (\langle \nu _d( L)\rangle )+\sharp (E)$.
For any closed subscheme $U\subseteq L$ we have $\langle \nu_d(U\cup E)\rangle \cap \langle \nu _d(L)\rangle
= \langle \nu _d(U)\rangle$.
For any $O\in \langle \nu _d(L\cup E)\rangle \setminus \langle \nu _d(E)\rangle$, the
set $\langle \{O\}\cup \nu _d(E)\rangle \cap \langle \nu _d(L)\rangle$ is a unique point.
\end{lemma}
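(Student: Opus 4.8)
The plan is to deduce all three assertions from the basic behaviour of linear spans under the Veronese map and a dimension count. First I would observe that since $\nu_d$ is an embedding, $\langle \nu_d(E \cup L)\rangle = \langle \nu_d(E) \cup \nu_d(L)\rangle$, and similarly for any subscheme $U \subseteq L$. To get the dimension formula, the key point is a separation statement: a set $E \subset \mathbb{P}^m \setminus L$ with $\sharp(E) \le d$ imposes independent conditions on the linear system $|\mathcal{I}_L(d)|$ of degree $d$ hypersurfaces containing $L$, i.e. $h^0(\mathcal{I}_{E \cup L}(d)) = h^0(\mathcal{I}_L(d)) - \sharp(E)$. Equivalently, in terms of spans, $\dim\langle \nu_d(E\cup L)\rangle = \dim\langle \nu_d(L)\rangle + \sharp(E)$. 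I would prove the separation by choosing coordinates so that $L = \{x_{k+1} = \cdots = x_m = 0\}$ (where $k = \dim L$); then a hypersurface of the form $\sum_{j>k} x_j g_j = 0$ with $\deg g_j = d-1$ contains $L$, and since each point $p_i \in E$ has some coordinate $x_{j(i)} \ne 0$ with $j(i) > k$, one can interpolate: pick the $g_j$ so that the value $\sum_j p_{i,j} g_j(p_i)$ is prescribed at each $p_i$ — this is a standard Lagrange-interpolation argument in degree $d-1$ on at most $d$ points, which succeeds because $d-1 \ge \sharp(E) - 1$, using $\sharp(E) \le d$.

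Next, for the second assertion, I would use that $U \subseteq L$ gives $\langle \nu_d(U)\rangle \subseteq \langle \nu_d(U \cup E)\rangle \cap \langle \nu_d(L)\rangle$ trivially, so only the reverse inclusion needs work. Here the idea is that the hyperplanes of $\mathbb{P}^n$ containing $\langle \nu_d(U\cup E)\rangle \cap \langle \nu_d(L)\rangle$ are exactly those spanned by (the Veronese pull-backs of) the two systems, and one computes
\[
\dim\bigl(\langle \nu_d(U\cup E)\rangle \cap \langle \nu_d(L)\rangle\bigr) = \dim\langle \nu_d(U\cup E)\rangle + \dim\langle \nu_d(L)\rangle - \dim\langle \nu_d(U\cup E\cup L)\rangle.
\]
Applying the separation statement three times — to $E$ over $L$ (giving $\dim\langle\nu_d(U\cup E\cup L)\rangle = \dim\langle\nu_d(L)\rangle + \sharp(E)$, since $U \cup L = L$ as schemes on the span level, $\langle\nu_d(U\cup L)\rangle = \langle\nu_d(L)\rangle$) and to $E$ over $U$ (noting $U \subseteq L$ so $E \cap L = \emptyset$ forces $E$ disjoint from $U$, hence $\dim\langle\nu_d(U\cup E)\rangle = \dim\langle\nu_d(U)\rangle + \sharp(E)$) — the dimension count collapses to $\dim\langle\nu_d(U)\rangle$, and combined with the trivial inclusion this forces equality.

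Finally, the third assertion is the case $U = L$ of the second, intersected with the span of a single extra point: given $O \in \langle \nu_d(L \cup E)\rangle \setminus \langle \nu_d(E)\rangle$, the line (in the ambient $\mathbb{P}^n$ sense, really the pencil) through $O$ meeting $\langle \nu_d(L)\rangle$ is controlled by writing $O$ in terms of a basis coming from $\nu_d(L)$ together with $\nu_d(E)$; the condition $O \notin \langle\nu_d(E)\rangle$ guarantees the component along $\langle\nu_d(L)\rangle$ is non-zero, and the direct-sum decomposition $\langle\nu_d(L\cup E)\rangle = \langle\nu_d(L)\rangle \oplus \langle\nu_d(E)\rangle$ (a consequence of the first two parts, since the two spans are disjoint and their dimensions add up) makes the intersection point unique. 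The main obstacle is the separation/interpolation step at the start: everything else is bookkeeping with the Grassmann dimension formula, but one must be careful that $\sharp(E) \le d$ is exactly what makes the interpolation in degree $d-1$ go through, and that points of $E$ lying on a common hypersurface through $L$ do not secretly fail to be independent — which is why the hypothesis $E \cap L = \emptyset$ is essential and must be used at each application.
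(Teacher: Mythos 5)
Your proposal is correct and its overall skeleton (establish that $\nu_d(E)$ is linearly independent modulo $\langle\nu_d(L)\rangle$, then deduce the second and third assertions by Grassmann's formula) is the same as the paper's; the difference lies in how the key separation step is proved. The paper takes a general hyperplane $H\supseteq L$ with $H\cap E=\emptyset$, uses the residual isomorphism $\mathcal{I}_{E\cup H}(d)\cong\mathcal{I}_E(d-1)$ and the standard vanishing $h^1(\mathcal{I}_E(d-1))=0$ for $\sharp(E)\le d$ points, and then restricts from $H$ to $L$; you instead work in coordinates and exhibit, for each $p_i\in E$, a degree~$d$ form $\sum_{j>k}x_jg_j$ vanishing on $L$ and on $E\setminus\{p_i\}$ but not at $p_i$, by interpolation in degree $d-1$. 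Both are valid; the paper's route is shorter because it delegates the interpolation to a quotable cohomological fact, while yours is more self-contained and makes explicit where $\sharp(E)\le d$ and $E\cap L=\emptyset$ enter. One small caution: your justification ``$E$ disjoint from $U$, hence $\dim\langle\nu_d(U\cup E)\rangle=\dim\langle\nu_d(U)\rangle+\sharp(E)$'' is not valid as stated (disjoint sets can have overlapping spans); the correct reason is that $\langle\nu_d(U)\rangle\subseteq\langle\nu_d(L)\rangle$ and $\nu_d(E)$ is independent modulo the larger space $\langle\nu_d(L)\rangle$, hence a fortiori modulo $\langle\nu_d(U)\rangle$ --- which you have already established, so the fix is immediate.
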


\begin{proof}
Since $E$ is a finite set and $E\cap L=\emptyset$, a general hyperplane $H$ containing $L$ contains
no point of $E$. Since $E \cap H = \emptyset$, we have $\mathcal {I}_{E\cup H}(d)
\cong \mathcal {I}_E(d-1)$. Since $\sharp (E)\le d$, we
have $h^1(\mathcal {I}_E(d-1))=0$.  Hence $\dim (\langle \nu _d(H\cup E)\rangle )
=\dim (\langle \nu _d(H)\rangle )+\sharp (E)$. Since $L\subseteq H$, we get $\dim (\langle \nu _d(E\cup L)\rangle ) =\dim (\langle \nu_d( L)\rangle )+\sharp (E)$. Grassmann's formula
give $\langle \nu _d(L)\rangle \cap \langle \nu _d(E)\rangle =\emptyset$. Hence $\langle \nu_d(U\cup E)\rangle \cap \langle \nu _d(L)\rangle
= \langle \nu _d(U)\rangle$ for any $U\subseteq L$. Fix any $O\in \langle \nu _d(L\cup E)\rangle \setminus \langle \nu _d(E)\rangle$. Since $O\notin \langle \nu _d(E)\rangle$, we
have $\dim (\langle \{O\}\cup \nu _d(E)\rangle )= \dim (\langle \nu _d(E))+1$.
Since $O\in \langle \nu _d(L\cup E)\rangle$ and $\langle \nu _d(L)\rangle \cap \langle \nu _d(E)\rangle =\emptyset$, Grassmann's formula gives that $\langle \{O\}\cup \nu _d(E)\rangle \cap \langle \nu _d(L)\rangle$ is a unique point.
\end{proof}

In the same way we get the following result.

\begin{lemma}\label{v3.0}
Fix a conic $T \subset \mathbb {P}^m$, an integer $d \ge 5$ and a finite
set $E\subset \mathbb {P}^m\setminus T$ such that $\sharp (E)\le d-1$. Then
$\dim (\langle \nu _d(E\cup T)\rangle ) =\dim (\langle \nu _d( T)\rangle )+\sharp (E)$.
For any closed subscheme $U\subseteq T$ we have $\langle \nu_d(U\cup E)\rangle \cap \langle \nu _d(T)\rangle
= \langle \nu _d(U)\rangle$.
For any $O\in \langle \nu _d(T\cup E)\rangle \setminus \langle \nu _d(E)\rangle$, the
set $\langle \{O\}\cup \nu _d(E)\rangle \cap \langle \nu _d(T)\rangle$ is a unique point.
\end{lemma}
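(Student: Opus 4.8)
The plan is to imitate the proof of Lemma \ref{v3}, with a quadric hypersurface through $T$ playing the role of a hyperplane through $L$. I would first reduce all three assertions to the single vanishing
$$h^1(\mathcal I_{T\cup E}(d))=0 .$$
Since $E\cap T=\emptyset$, the inclusion $\mathcal I_{T\cup E}=\mathcal I_T\cap\mathcal I_E\subseteq\mathcal I_T$ has cokernel $\mathcal O_E$, so the twist by $\mathcal O_{\mathbb P^m}(d)$ of $0\to\mathcal I_{T\cup E}\to\mathcal I_T\to\mathcal O_E\to 0$ gives $h^0(\mathcal I_T(d))-h^0(\mathcal I_{T\cup E}(d))=\sharp(E)-h^1(\mathcal I_{T\cup E}(d))$, where I have used $h^1(\mathcal I_T(d))=0$ (a conic is a plane curve, hence arithmetically Cohen--Macaulay in $\mathbb P^m$, so this holds in every degree). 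As $\dim\langle\nu_d(Z)\rangle=n-h^0(\mathcal I_Z(d))$ for every closed subscheme $Z$, the displayed vanishing is \emph{equivalent} to the first assertion $\dim\langle\nu_d(E\cup T)\rangle=\dim\langle\nu_d(T)\rangle+\sharp(E)$. Granting it, Grassmann's formula gives $\langle\nu_d(T)\rangle\cap\langle\nu_d(E)\rangle=\emptyset$; then for any closed $U\subseteq T$ one has $\langle\nu_d(U\cup E)\rangle=\langle\nu_d(U)\rangle+\langle\nu_d(E)\rangle$, and any line joining a point of $\langle\nu_d(U)\rangle\subseteq\langle\nu_d(T)\rangle$ to a point of $\langle\nu_d(E)\rangle$ is not contained in $\langle\nu_d(T)\rangle$, hence meets it only in the first point; this yields $\langle\nu_d(U\cup E)\rangle\cap\langle\nu_d(T)\rangle=\langle\nu_d(U)\rangle$. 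Finally, for $O\in\langle\nu_d(T\cup E)\rangle\setminus\langle\nu_d(E)\rangle$ we have $\dim\langle\{O\}\cup\nu_d(E)\rangle=\sharp(E)$, and a last application of Grassmann's formula inside $\langle\nu_d(T\cup E)\rangle$ forces $\langle\{O\}\cup\nu_d(E)\rangle\cap\langle\nu_d(T)\rangle$ to be a single point. These deductions are exactly as in Lemma \ref{v3}, so I would only sketch them.

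To prove the vanishing I would produce a quadric hypersurface $Q\subset\mathbb P^m$ with $T\subseteq Q$ and $Q\cap E=\emptyset$. If $m=2$, take $Q=T$. If $m\ge 3$, choose coordinates with $\langle T\rangle=\{x_3=\cdots=x_m=0\}$; the degree-two part of the saturated ideal of $T$ is spanned by the monomials $x_ix_j$ with $3\le i\le m$, $0\le j\le m$, together with the quadratic equation of $T$ inside $\langle T\rangle$, and the common zero locus of these forms is exactly $T$. Hence the base locus of $|\mathcal I_T(2)|$ is $T$, and since $E$ is a finite set disjoint from $T$ a general member $Q$ of $|\mathcal I_T(2)|$ avoids $E$. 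Because $T\subseteq Q$ we have $\mbox{Res}_Q(T)=\emptyset$, and because $E\cap Q=\emptyset$ we get $\mbox{Res}_Q(T\cup E)=E$ and $(T\cup E)\cap Q=T$; thus the residual sequence (\ref{eqc0}) for the divisor $Q$ reads
$$0\to\mathcal I_E(d-2)\to\mathcal I_{T\cup E}(d)\to\mathcal I_{T,Q}(d)\to 0 .$$
Now $h^1(\mathcal I_E(d-2))=0$ because $\sharp(E)\le d-1$ and at most $d-1$ points of $\mathbb P^m$ always impose independent conditions on forms of degree $d-2$. And $h^1(Q,\mathcal I_{T,Q}(d))=0$: this is trivial when $m=2$ (then $Q=T$ and $\mathcal I_{T,Q}=0$), while for $m\ge 3$ the inclusion $\mathcal I_Q\subset\mathcal I_T$ gives $0\to\mathcal O_{\mathbb P^m}(-2)\to\mathcal I_T\to\mathcal I_{T,Q}\to 0$, whose twist by $\mathcal O(d)$ has cohomology sequence placing $h^1(\mathcal I_{T,Q}(d))$ between $h^1(\mathcal I_T(d))=0$ and $h^2(\mathcal O_{\mathbb P^m}(d-2))=0$. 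Therefore $h^1(\mathcal I_{T\cup E}(d))=0$, which completes the proof.

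The step I expect to require the most care is the one with no counterpart in Lemma \ref{v3}: replacing a hyperplane by a quadric leaves the nonzero boundary term $\mathcal I_{T,Q}(d)$ in the residual sequence, so its first cohomology must be killed on its own --- this is exactly where the plane-curve (ACM) nature of the conic $T$ enters. Apart from that, everything is bookkeeping parallel to Lemma \ref{v3}, and the only numerical hypothesis that is genuinely used is $\sharp(E)\le d-1$, which is what makes $h^1(\mathcal I_E(d-2))$ vanish.
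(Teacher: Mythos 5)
Your proof is correct and follows essentially the route the paper intends: the paper gives no separate proof of Lemma \ref{v3.0}, saying only that it is obtained ``in the same way'' as Lemma \ref{v3}, i.e. by replacing the general hyperplane through $L$ with a general quadric hypersurface through $T$ missing $E$, which is exactly your construction, with the same reduction to $h^1(\mathcal{I}_E(d-2))=0$ from $\sharp(E)\le d-1$. The only (harmless) difference is that you run the residual sequence for $T\cup E$ itself and therefore must also check $h^1(Q,\mathcal{I}_{T,Q}(d))=0$ via the ACM property of the plane conic, whereas the verbatim adaptation of Lemma \ref{v3} would first prove the statement for $Q\cup E$ (using $\mathcal{I}_{Q\cup E}(d)\cong \mathcal{I}_E(d-2)$) and then descend to $T\subseteq Q$, avoiding that boundary term altogether.
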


The following lemma was proved (with $D$ a hyperplane) in \cite{bb2}, Lemma 8. The same proof works for an arbitrary hypersurface $D$ of $\mathbb {P}^m$ (see also Remark \ref{v4.0} below).

\begin{lemma}\label{v4}
 Fix $P\in \mathbb{P}^n$. Assume
the existence of zero-dimensional schemes $A, B\subset \mathbb {P}^m$ such
that $A\ne B$, $P\in \langle \nu _d(A)\rangle \cap \langle \nu _d(B)\rangle$, $P\notin \langle \nu _d(A')\rangle$ for any $A'\subsetneq A$ and $P\notin \langle \nu _d(B')\rangle$ for any $B'\subsetneq B$.
Assume that $B$ is reduced. Assume the existence of a positive integer $t\le d$ and of a degree $t$ hypersurface $D\subset \mathbb {P}^m$
such that $h^1(\mathcal {I}_{\mbox{Res}_D(A\cup B)}(d-t)) =0$. Set $E:= B\setminus B\cap D$. Then $\nu _d(E)$ is linearly independent, $E = \mbox{Res}_D(A)$ and every unreduced connected component
of $A$ is contained in $D$. The linear space $\langle \nu _d(A)\rangle \cap \langle \nu _d(B)\rangle$
is the linear span of its supplementary subspaces $\langle \nu _d(E)\rangle$
and $\langle \nu _d(A\cap D)\rangle \cap \langle \nu _d(B\cap D)\rangle$.\end{lemma}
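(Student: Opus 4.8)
The plan is to carry over, with $D$ now an arbitrary hypersurface of degree $t$, the argument of \cite{bb2}, Lemma 8. Its two engines are the residual exact sequence (\ref{eqc0}) for the Cartier divisor $D$ and the linear projection $\pi : \mathbb {P}^n \dashrightarrow \mathbb {P}^N$, $N:= h^0(\mathcal {O}_{\mathbb {P}^m}(d-t))-1$, away from $\langle \nu _d(D)\rangle$; we identify the target with the ambient space of $\nu _{d-t}$ via $H^0(\mathcal {I}_D(d)) = f_D\cdot H^0(\mathcal {O}_{\mathbb {P}^m}(d-t))$, $f_D$ an equation of $D$. First I would record two facts, valid for any zero-dimensional $Z\subset \mathbb {P}^m$ with $h^1(\mathcal {I}_{\mbox{Res}_D(Z)}(d-t))=0$ (a condition that holds for $Z=A$ and for $Z=B$, since $\mbox{Res}_D(A)$ and $\mbox{Res}_D(B)$ are subschemes of $\mbox{Res}_D(A\cup B)$): a chase in (\ref{eqc0}) gives $\langle \nu _d(Z)\rangle \cap \langle \nu _d(D)\rangle = \langle \nu _d(Z\cap D)\rangle$, and a dimension count in (\ref{eqc0}), in the spirit of Lemmas \ref{v3} and \ref{v3.0}, gives $\pi (\langle \nu _d(Z)\rangle ) = \langle \nu _{d-t}(\mbox{Res}_D(Z))\rangle$. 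Since $B$ is reduced, $\mbox{Res}_D(B) = E$, and $h^1(\mathcal {I}_E(d))\le h^1(\mathcal {I}_E(d-t))\le h^1(\mathcal {I}_{\mbox{Res}_D(A\cup B)}(d-t))=0$, so $\nu _d(E)$ is linearly independent. If $P\in \langle \nu _d(D)\rangle$, then by the first fact $P\in \langle \nu _d(A\cap D)\rangle \cap \langle \nu _d(B\cap D)\rangle$, so the minimality of $A$ and of $B$ forces $A, B\subseteq D$, whence $E=\emptyset =\mbox{Res}_D(A)$ and every assertion is trivial; so from now on I assume $P\notin \langle \nu _d(D)\rangle$, hence $\mbox{Res}_D(A)\neq\emptyset\neq E$ and $Q:= \pi (P)$ is defined.

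By the second fact, $Q\in \langle \nu _{d-t}(\mbox{Res}_D(A))\rangle \cap \langle \nu _{d-t}(E)\rangle$. Let $W\subseteq \mbox{Res}_D(A)$ and $W'\subseteq E$ be minimal with $Q\in \langle \nu _{d-t}(W)\rangle$, resp. $Q\in \langle \nu _{d-t}(W')\rangle$. If $W=\mbox{Res}_D(A)$, $W'=E$ and $\mbox{Res}_D(A)\neq E$, then Lemma \ref{v1} applied to the Veronese $\nu _{d-t}$ gives $h^1(\mathcal {I}_{\mbox{Res}_D(A)\cup E}(d-t))>0$, against the hypothesis. If instead $W\subsetneq \mbox{Res}_D(A)$, I would lift $W$ to a subscheme $A^{*}$ with $A\cap D\subseteq A^{*}\subseteq A$ and $\mbox{Res}_D(A^{*})=W$ (the colon operation $J\mapsto J:f_D$ carries the subschemes of $A$ containing $A\cap D$ onto the subschemes of $\mbox{Res}_D(A)$); then $\deg (A^{*})<\deg (A)$ and $h^1(\mathcal {I}_W(d-t))=0$, so $\pi (\langle \nu _d(A^{*})\rangle ) = \langle \nu _{d-t}(W)\rangle \ni Q$, and since the fibre of $\pi |_{\langle \nu _d(A)\rangle }$ through $P$ is $\langle P, \langle \nu _d(A\cap D)\rangle \rangle$, we get $P\in \langle \nu _d(A^{*})\rangle$, contradicting the minimality of $A$; the case $W'\subsetneq E$ is symmetric, using the minimality of $B$. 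Therefore $\mbox{Res}_D(A) = E = \mbox{Res}_D(B)$, which is one of the conclusions and (together with the reducedness of $E$) also gives that $E=\mbox{Res}_D(A)$ is reduced.

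For the remaining conclusions I treat first the unreduced components. Let $C$ be an unreduced connected component of $A$, with support $\{p\}$. If $p\notin D$ then $C\cap D=\emptyset$, so $C=\mbox{Res}_D(C)\subseteq \mbox{Res}_D(A)=E$, absurd as $E$ is reduced. If $p\in D$ and $\mbox{Res}_D(C)\neq\emptyset$, then $p\in \mbox{supp}(\mbox{Res}_D(A))=\mbox{supp}(\mbox{Res}_D(B))$; but the connected component of the reduced scheme $B$ at $p$ is the reduced point $\{p\}\subseteq D$, with empty residue, a contradiction. Hence $C\subseteq D$, and therefore $A = (A\cap D)\sqcup E$ and $B = (B\cap D)\sqcup E$ with disjoint supports. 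A further use of (\ref{eqc0}) gives $h^1(\mathcal {I}_A(d))=h^1(\mathcal {I}_{A\cap D}(d))$, which together with $h^1(\mathcal {I}_E(d))=0$ yields $\langle \nu _d(A)\rangle = \langle \nu _d(A\cap D)\rangle \oplus \langle \nu _d(E)\rangle$, and symmetrically $\langle \nu _d(B)\rangle = \langle \nu _d(B\cap D)\rangle \oplus \langle \nu _d(E)\rangle$; moreover $\langle \nu _d(E)\rangle \cap \langle \nu _d(D)\rangle = \langle \nu _d(\emptyset )\rangle = \emptyset$ by the first fact. Intersecting these two decompositions, and using that $\langle \nu _d(A\cap D)\rangle$ and $\langle \nu _d(B\cap D)\rangle$ lie inside $\langle \nu _d(D)\rangle$, one reads off that $\langle \nu _d(A)\rangle \cap \langle \nu _d(B)\rangle$ is the span of the supplementary subspaces $\langle \nu _d(E)\rangle$ and $\langle \nu _d(A\cap D)\rangle \cap \langle \nu _d(B\cap D)\rangle$, as claimed.

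I expect the main obstacle to be the lifting step in the second paragraph: passing from ``$Q\in \langle \nu _{d-t}(W)\rangle$ for a proper subscheme $W$ of $\mbox{Res}_D(A)$'' to ``$P\in \langle \nu _d(A^{*})\rangle$ for a proper subscheme $A^{*}$ of $A$''. This uses the surjectivity of the colon map onto the subschemes of $\mbox{Res}_D(A)$, which is delicate exactly when $\mbox{Res}_D(A)$ has support meeting $D$; the vanishing $h^1(\mathcal {I}_W(d-t))=0$ needed to identify $\pi (\langle \nu _d(A^{*})\rangle )$; and a careful description of the fibres of $\pi$ restricted to $\langle \nu _d(A)\rangle$. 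Everything else reduces to routine cohomology computations and applications of the Grassmann formula.
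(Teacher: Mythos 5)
Your argument is correct and is essentially the proof the paper has in mind: the paper offers no proof of its own beyond citing \cite{bb2}, Lemma~8 (the case of a hyperplane) and asserting that the same proof works for a degree~$t$ hypersurface, and that proof is precisely your combination of the residual exact sequence (\ref{eqc0}) with the linear projection away from $\langle \nu _d(D)\rangle$, identified with the space of $\nu _{d-t}$ via multiplication by an equation $f_D$ of $D$ (the same mechanism the paper uses in Remark \ref{v4.0}). The lifting step you flag as the main obstacle does go through --- taking $\mathcal {I}_{A^{*}}:= \mathcal {I}_A+f_D\cdot \mathcal {I}_W$ gives $A\cap D\subseteq A^{*}\subseteq A$, $\mbox{Res}_D(A^{*})=W$ and $\deg (A^{*})=\deg (A\cap D)+\deg (W)<\deg (A)$ --- so no gap remains.
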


\begin{remark}\label{v4.0}
Take the set-up of Lemma \ref{v4}.

\quad {\emph {Claim :}} We have $\langle \nu _d(A\cap D)\rangle \cap \langle \nu _d(B\cap D)\rangle \ne \emptyset$ and there is $Q\in \langle \nu _d(A\cap D)\rangle \cap \langle \nu _d(B\cap D)\rangle$ such that $P\in \langle \{Q\}\cup \nu _d(E)\rangle$.

\quad {\emph {Proof of the Claim :}} Lemma \ref{v1}
gives $h^1(\mathcal {I}_{A\cup B}(d)) >0$. Since $P\notin \langle \nu _d(A')\rangle$ for any $A'\subsetneq A$ and $P\notin \langle \nu _d(B')\rangle$ for any $B'\subsetneq B$, we get $E\ne A$ and $E\ne B$, i.e.
$A\cap D\ne \emptyset$ and $B\cap D\ne \emptyset$. The residual exact sequence
(\ref{eqc0}) gives the following exact sequence:
\begin{equation}\label{eqv2}
0 \to \mathcal {I}_{\mbox{Res}_D(A\cup B)}(d-t) \to \mathcal {I}_{A\cup B}(d) \to \mathcal {I}_{(A\cup B)\cap, D,D}(d) \to 0
\end{equation}
From (\ref{eqv2}) and the definition of $E$ we get the last assertion of
Lemma \ref{v4}. Since $P\in \langle \nu _d(A)\rangle \cap \langle \nu _d(B)\rangle$, $P\notin \langle \nu _d(A')\rangle$ for any $A'\subsetneq A$ and $P\notin \langle \nu _d(B')\rangle$ for any $B'\subsetneq B$, $\nu _d(A)$ and $\nu _d(B)$ are linearly independent.
Since $h^1(\mathcal {I}_{A\cup B}(d)) >0$ (Lemma \ref{v1}), the exact sequence (\ref{eqv2})
gives $h^1(D,\mathcal {I}_{(A\cup B)\cap, D,D}(d))>0$. Hence the linear independence
of $\nu _d(A)$ and $\nu _d(B)$ implies $\langle \nu _d(A\cap D)\rangle \cap \langle \nu _d(B\cap D)\rangle \ne \emptyset$. Since $\nu _d(A)$ is linearly independent, $P\in \langle \nu _d(A)\rangle \cap \langle \nu _d(B)\rangle$, $P\notin \langle \nu _d(A')\rangle$ for any $A'\subsetneq A$ and $P\notin \langle \nu _d(B')\rangle$ for any $B'\subsetneq B$, the sets $\langle \{P\}\cup \nu _d(E)\rangle
\cap \langle \nu _d(A\cap D)\rangle$ and $\langle \{P\}\cup \nu _d(E)\rangle
\cap \langle \nu _d(A\cap D)\rangle$ are given by a unique point. Call it $Q_A$ and $Q_B$, respectively.
Obviously $P\in \langle \nu _d(E)\cup \{Q_A\}\rangle \cap \langle \nu _d(E)\cup \{Q_B\}\rangle$.
Since $P\notin \langle \nu _d(E)\rangle$, we get $Q_A = Q_B$. Set $Q:= Q_A$.
\end{remark}

For any reduced projective set $Y\subset \mathbb {P}^r$ spanning $\mathbb {P}^r$ and any $P\in \mathbb {P}^r$ let $r_Y(P)$ denote the minimal cardinality of a finite set $S\subset Y$ such
that $P\in \langle S\rangle$. The positive integer $r_Y(P)$ is often called the $Y$-rank of $P$.

\begin{lemma}\label{o1}
Assume $m=2$. Fix integers $w\ge 3$ and $d\ge 4w-1$. Take lines $L_1, L_2\subset \mathbb {P}^2$ such that $L_1\ne L_2$.
Set $\{O\}:= L_1\cap L_2$. Let $A_1\subset L_1$ be the degree $w$ effective divisor
of $L_1$ with $O$ as its support. Fix $O_2\in L_2\setminus \{O\}$
and let $A_2\subset L_2$ the degree $w$ effective divisor of $L_2$ with $O_2$ as its reduction. Set $A:= A_1\cup A_2$. Fix
$P\in \langle \nu _d(A)\rangle$ such that $P\notin \langle \nu _d(A')\rangle$ for any $A'\subsetneq A$. Then
$br (P) = 2w$ and $sr (P)
\ge  2d+3-2w$. There is $P$ as above with $sr (P) = 2d+3-2w$.
\end{lemma}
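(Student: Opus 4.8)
The plan is to prove the three assertions of the statement — $br(P)=2w$, the bound $sr(P)\ge 2d+3-2w$, and the realizability of equality — using Lemmas \ref{v1}, \ref{c0}, \ref{v4}, Remark \ref{v2}, the classification of symmetric ranks of binary forms (Sylvester), and the identity $\langle\nu_d(L_1)\rangle\cap\langle\nu_d(L_2)\rangle=\{\nu_d(O)\}$, which follows from $\dim\langle\nu_d(L_1\cup L_2)\rangle=2d$ and Grassmann's formula. First, $\deg(A)=2w$ gives $br(P)\le 2w$, and $d\ge 4w-1$ gives $2w\le(d+1)/2$; if $E\ne A$ were another zero-dimensional scheme of degree $\le 2w$ with $P\in\langle\nu_d(E)\rangle$, then, after shrinking $E$ and applying Lemma \ref{v1}, $h^1(\mathcal{I}_{A\cup E}(d))>0$ while $\deg(A\cup E)\le 4w-1\le d<d+2$, contradicting Lemma \ref{c0}; so $A$ is the unique such scheme, and since the $br$-evincing scheme of Remark \ref{v2} has degree $\le 2w$ it must equal $A$, giving $br(P)=2w$. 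Writing $P=P_1+P_2$ with $P_i\in\langle\nu_d(A_i)\rangle$ (forced since $\nu_d(A)$ is linearly independent, $\deg A\le d+1$), and noting $P_i\notin\langle\nu_d(A_i')\rangle$ for $A_i'\subsetneq A_i$ because $P\notin\langle\nu_d(A')\rangle$ for $A'\subsetneq A$, the same argument on $L_i\cong\mathbb{P}^1$ shows $A_i$ is the unique scheme of degree $\le w$ on $L_i$ with $P_i$ in its span, hence $br_{L_i}(P_i)=w$; as $A_i$ is non-reduced, Sylvester gives $sr_{L_i}(P_i)=d+2-w$.

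For the lower bound, suppose a reduced $S$ evinces $sr(P)$ with $\sharp S\le 2d+2-2w$; shrinking $S$, assume $P\notin\langle\nu_d(S')\rangle$ for $S'\subsetneq S$. Since $A$ is non-reduced, $A\ne S$, so Lemma \ref{v1} gives $h^1(\mathcal{I}_{A\cup S}(d))>0$ with $\deg(A\cup S)\le 2d+2<3d$, and Lemma \ref{c0} yields a line $L$ with $\deg(L\cap(A\cup S))\ge d+2$ or a conic $T$ with $\deg(T\cap(A\cup S))\ge 2d+2$. In the line case, examining $L=L_1$, $L=L_2$, $L\notin\{L_1,L_2\}$ separately (there $\deg(L\cap A)$ is $w$, $w+1$, or $\le 2$) one checks that $\deg(\mathrm{Res}_L(A\cup S))\le d+2$ and that no line meets $\mathrm{Res}_L(A\cup S)$ in length $\ge d+1$, so Lemma \ref{c0} gives $h^1(\mathcal{I}_{\mathrm{Res}_L(A\cup S)}(d-1))=0$ and Lemma \ref{v4} (with $D=L$, $t=1$) forces every unreduced connected component of $A$ onto $L$ — impossible, since $A_1$ spans $L_1$ and $A_2$ spans $L_2$. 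In the conic case, $\deg(T\cap(A\cup S))\ge 2d+2$ together with $\sharp S\le 2d+2-2w$ forces $S\subseteq T$, so $\mathrm{Res}_T(A\cup S)=\mathrm{Res}_T(A)$ has degree $\le 2w<d$, whence Lemma \ref{c0} gives $h^1(\mathcal{I}_{\mathrm{Res}_T(A\cup S)}(d-2))=0$ and Lemma \ref{v4} (with $D=T$, $t=2$) forces $L_1,L_2\subseteq T$, i.e. $T=L_1\cup L_2$.

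The remaining case $T=L_1\cup L_2$ is the delicate one. Here $A\cup S\subseteq T$, so $\deg(A\cup S)\ge 2d+2$; with $\deg A=2w$ this forces $\sharp S=2d+2-2w$, $O,O_2\notin S$, hence $S=S_1\sqcup S_2$ with $S_i\subseteq L_i\setminus\{O\}$. Using $\langle\nu_d(L_1)\rangle\cap\langle\nu_d(L_2)\rangle=\{\nu_d(O)\}$ and passing to lifts, $P=P_1+P_2$ gives $P_i\in\langle\nu_d(S_i\cup\{O\})\rangle$; choosing a minimal $Y_i\subseteq S_i\cup\{O\}$ with $P_i\in\langle\nu_d(Y_i)\rangle$ we get $\sharp S_i+1\ge\sharp Y_i\ge sr_{L_i}(P_i)=d+2-w$, so $\sharp S_i=d+1-w$ and $Y_i=S_i\cup\{O\}$; in particular $O\in Y_1$. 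Lemma \ref{v1} inside $L_1$, applied to $A_1$ and $Y_1$, gives $h^1(\mathcal{I}_{A_1\cup Y_1,L_1}(d))>0$; but $A_1\cup Y_1=A_1\cup S_1$ has degree $w+(d+1-w)=d+1$, so this $h^1$ vanishes on $\mathbb{P}^1$ — a contradiction. Hence $sr(P)\ge 2d+3-2w$ for every admissible $P$.

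Finally, to attain $2d+3-2w$, take $S=S_1\sqcup S_2\subseteq L_1\cup L_2$ with $S_1\subseteq L_1\setminus\{O\}$ a set of $d+2-w$ points and $S_2\subseteq L_2\setminus\{O,O_2\}$ a set of $d+1-w$ points, so $\sharp S=2d+3-2w$. Residual exact sequences along $L_1$ (the restriction $(A\cup S)\cap L_1=A_1\cup S_1$ has degree $d+2$, so imposes no conditions in degree $d$, while $\mathrm{Res}_{L_1}(A\cup S)=A_2\cup S_2$ has degree $d+1$ and $h^1=1$ in degree $d-1$) give $h^1(\mathcal{I}_{A\cup S}(d))=2$; since $\deg(A\cup S)=2d+3$ and $h^1(\mathcal{I}_A(d))=h^1(\mathcal{I}_S(d))=0$, Grassmann's formula makes $\Lambda:=\langle\nu_d(A)\rangle\cap\langle\nu_d(S)\rangle$ a line. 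The same computation for $A_1'\cup A_2\cup S$ and for $A_1\cup A_2'\cup S$, where $A_i'\subsetneq A_i$ is the subscheme of degree $w-1$, gives $h^1=1$ in degree $d$ in each case, so by Grassmann $\Lambda$ meets each of the hyperplanes $H_1=\langle\nu_d(A_1'\cup A_2)\rangle$, $H_2=\langle\nu_d(A_1\cup A_2')\rangle$ of $\langle\nu_d(A)\rangle$ in one point; as $H_1\cup H_2=\bigcup_{A'\subsetneq A}\langle\nu_d(A')\rangle$, any $P\in\Lambda\setminus(H_1\cup H_2)$ satisfies the hypotheses of the lemma, so $br(P)=2w$ and $sr(P)\ge 2d+3-2w$ by the preceding parts, while $P\in\langle\nu_d(S)\rangle$ gives $sr(P)\le 2d+3-2w$. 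The main obstacle, as I see it, is exactly this existence step: decompositions of $P$ respecting the splitting $P=P_1+P_2$ cost at least $2(d+1-w)$ — the shared point $O$ may be ``borrowed'' by each factor but lies in no minimal $L_i$-decomposition of $P_i$ — and the correct value is one more, so one must choose $S\subseteq L_1\cup L_2$ carefully enough that the several residual exact sequences land on the exact values $h^1=2$ and $h^1=1$, making $\Lambda$ a line genuinely transverse to both degenerate hyperplanes, rather than empty or positive-dimensional.
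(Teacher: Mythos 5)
Your proof is correct, and its skeleton --- Remark \ref{v2} for $br(P)=2w$, then Lemmas \ref{v1}, \ref{c0} and \ref{v4} to reduce the lower bound to the case where $A\cup S$ sits on the conic $L_1\cup L_2$, then an explicit $S\subset L_1\cup L_2$ of cardinality $2d+3-2w$ plus Grassmann's formula for the equality case --- is the same as the paper's. The one genuinely different step is the decisive sub-case $A\cup S\subset L_1\cup L_2$. The paper splits on $\sharp(S\cap L_1)$ and, in each branch, residuates with respect to one of the two lines (the residual then has degree $\le d$, so its $h^1$ vanishes) and invokes Lemma \ref{v4} to force both unreduced components of $A$ onto a single line, a contradiction. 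You instead decompose $P$ as $P_1+P_2$ with $P_i\in\langle \nu_d(A_i)\rangle$, use $\langle \nu_d(L_1)\rangle\cap\langle \nu_d(L_2)\rangle=\{\nu_d(O)\}$ to push each $P_i$ into $\langle \nu_d((S\cap L_i)\cup\{O\})\rangle$, apply Sylvester's classification on each rational normal curve to force $\sharp(S\cap L_i)=d+1-w$ exactly, and then contradict Lemma \ref{v1} because $A_1\cup(S\cap L_1)\cup\{O\}$ has degree only $d+1$. Both arguments work; yours trades the case split for the binary classification and has the merit of explaining where the extra $+1$ in $2d+3-2w$ comes from (the point $O$ can be shared by the two lines but belongs to no minimal decomposition of either $P_i$). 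Your existence step is also more careful than the paper's: you verify by two further $h^1$ computations that the line $\Lambda$ is not contained in either hyperplane $\langle \nu_d(A_1'\cup A_2)\rangle$, $\langle \nu_d(A_1\cup A_2')\rangle$, whereas the paper simply takes any point of $M\setminus\{P'\}$. Two harmless slips: in the line case the residual has degree $\le d$, not merely $\le d+2$, so your extra check of the three positions of $L$ is unnecessary; and in the conic case $\mathrm{Res}_T(A\cup S)$ is in fact empty once $\deg(T\cap(A\cup S))\ge 2d+2=\deg(A\cup S)$, though your weaker bound $\le 2w<d$ suffices.
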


\begin{proof}
Since $\deg (A) = 2w \le (d+1)/2$ and $P\notin \langle \nu _d(A')\rangle$ for any $A'\subsetneq A$,
$A$ is the only scheme evincing $br (P)$ (Remark \ref{v2}).

\quad (a) In this step we prove the existence of $P$ as above and with $sr (P) \le 2d+3-2w$. Fix
$B_1\subset L_1\setminus \{O\}$ such that $\sharp (B_1) = d-w+2$ and $B_2\subset L_2\setminus \{P_2\}$ such
that $\sharp (B_2)=d-w+1$. Since $O\notin B_1$, $\deg (A_1\cup B_1)=d+2$ and $\nu _d(L_1)$ is
a degree $d$ rational normal curve in its linear span, the set $\langle \nu _d(A_1)\rangle\cap \langle \nu _d(B_1)\rangle$
is a single point (call it $P'$). We have $P'\notin \langle W\rangle$ if either $W\subsetneq A_1$ or $W\subsetneq B_1$.
Since $\dim (\langle \nu _d(L_1\cup L_2)\rangle )=2d$, $\nu _d(A_1\cup A_2\cup B_1\cup B_2)$
spans $\langle \nu _d(L_1\cup L_2)\rangle$ and $\dim (\langle \nu _d(B_1\cup
B_2)\rangle )=2d+2-2w$, the set $\langle \nu _d(A_1\cup
A_2)\rangle \cap \langle \nu _d(B_1\cup
B_2)\rangle $ is a line, $M$. Obviously $P'\in M$. Take as $P$ any of the points of $M\setminus \{P'\}$.
Since $P\in \langle \nu _d(B_1\cup B_2)\rangle$, we have $sr (P)\le 2d+3-2w$.

\quad (b) To conclude the proof it is sufficient to prove that $sr (P) \ge 2d+3-2w$. Assume $sr (P) \le 2d+2-2w$
and fix $B\subset \mathbb {P}^2$ evincing $sr (P)$. We have $h^1(\mathcal {I}_{A\cup B}(d)) >0$
(Lemma \ref{v1}) and $\deg (A\cup B) \le 2d+2$. Hence either there is a line $D\subset \mathbb {P}^2$
such that $\deg (D\cap (A\cup B)) \ge d+2$ or there is a conic $T$ such that
$\deg (T\cap (A\cup B)) \ge 2d+2$ (Lemma \ref{c0}). 

\quad (b1) Assume  the existence of a line $D$ such that $\deg (D\cap (A\cup B)) \ge d+2$. Since $\deg (\mbox{Res}_D(A\cup B))  \le (2d-2w+2)+2w-d-2 =d$,
we have $h^1(\mathcal {I}_{\mbox{Res}_D(A\cup B)}(d-1))=0$. Hence Lemma \ref{v4} gives
$A\subset D$, a contradiction.

\quad (b2) Assume the existence of a conic $T$ such that $\deg (T\cap (A\cup B)) \ge 2d+2$.
Since $\deg (A) +\deg (B) \le 2d+2$, we get $A\cap B = \emptyset$,
$\sharp (B) = 2d+2-2w$ and $A\cup B \subset T$.
We have $\deg (L_2\cap A) = w+1 $ and $\deg (L_1\cap A)=w$.
Since $w\ge 3$, Bezout theorem implies that $L_1\cup L_2$ is the unique conic containing $A$. Hence $T = L_1\cup L_2$.
Set $B_i:= B\cap L_i$. First assume $\sharp (B_1) \ge d-w+2$. Since $\sharp (B_1) \le d-w$, the
scheme
$\mbox{Res}_{L_1}(A\cup B)
= A_2\cup B_2$ has degree $\le d$. Hence $h^1(\mathcal {I}_{\mbox{Res}_{L_1}(A\cup B)}(d-1))=0$. Lemma \ref{v4} gives $A\subset L_1$, a contradiction. Now assume
$\sharp (B_1) \le d-w+1$. Let $G\subset L_1$ be the degree $w-1$ effective divisor
of $L_1$ with $O$ as its support. Since $\mbox{Res}_{L_2}(A\cup B) = G\cup B_1$ has degree
$\le d$, Lemma \ref{v4} implies $A\subset L_2$, a contradiction.
\end{proof}

\begin{lemma}\label{o2}
Assume $m=2$. Fix integers  $w\ge 2$ and $d\ge 4w+1$. Take lines $L_1, L_2\subset \mathbb {P}^2$ such that $L_1\ne L_2$.
Set $\{O\}:= L_1\cap L_2$. Let $A_1\subset L_1$ be the degree $w+1$ effective divisor
of $L_1$ with $O$ as its support. Fix $O_2\in L_2\setminus \{O\}$
and let $A_2\subset L_2$ the degree $w$ effective divisor of $L_2$ with $O_2$ as its reduction. Set $A:= A_1\cup A_2$. Fix
$P\in \langle \nu _d(A)\rangle$ such that $P\notin \langle \nu _d(A')\rangle$ for any $A'\subsetneq A$. Then
$br (P) = 2w+1$ and $sr (P)
\ge  2d+2-2w$. There is $P$ as above with $sr (P) = 2d+2-2w$.
\end{lemma}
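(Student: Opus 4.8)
The plan is to follow the template already established by Lemma \ref{o1}, adapting the degrees to the new configuration where $A_1$ has degree $w+1$ instead of $w$ and $A_2$ has degree $w$, so $\deg(A)=2w+1=br(P)$ and the expected symmetric rank drops by one to $2d+2-2w$. As in Lemma \ref{o1}, the first observation is that $\deg(A)=2w+1\le(d+1)/2$ (using $d\ge 4w+1$), so $A$ is the unique scheme evincing $br(P)$ by Remark \ref{v2}, hence $br(P)=2w+1$.

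For the existence half (Step (a)), I would choose $B_1\subset L_1\setminus\{O\}$ with $\sharp(B_1)=d-w+1$ and $B_2\subset L_2\setminus\{O_2\}$ with $\sharp(B_2)=d-w$, so that $\deg(A_1\cup B_1)=d+2$ and $\deg(A_2\cup B_2)=d+1$; on each rational normal curve $\nu_d(L_i)$ this forces $\langle\nu_d(A_i)\rangle\cap\langle\nu_d(B_i)\rangle$ to be exactly one point for $i=1$ (and empty or a point for $i=2$, but the relevant intersection is on $L_1$). Since $\dim\langle\nu_d(L_1\cup L_2)\rangle=2d$, $\nu_d(A\cup B_1\cup B_2)$ spans this space, and $\dim\langle\nu_d(B_1\cup B_2)\rangle=2d+1-2w$, Grassmann's formula makes $\langle\nu_d(A)\rangle\cap\langle\nu_d(B_1\cup B_2)\rangle$ a line $M$; I take $P\in M$ off the point $P'$ coming from the $L_1$-intersection, giving $sr(P)\le\sharp(B_1)+\sharp(B_2)=2d+1-2w$. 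Wait — this is one less than the claimed bound, so I would instead arrange $\langle\nu_d(A)\rangle\cap\langle\nu_d(B_1\cup B_2)\rangle$ to be a single point and take $P$ to be that point, or more carefully choose the cardinalities $\sharp(B_1)=d-w+1$, $\sharp(B_2)=d-w+1$ so that $\sharp(B_1\cup B_2)=2d+2-2w$ and the intersection is nonempty; the precise bookkeeping of which $B_i$ realizes the extra dimension is exactly the routine part to be checked against the dimension count $\dim\langle\nu_d(B_1\cup B_2)\rangle=2d+2-2w$ and $\dim\langle\nu_d(L_1\cup L_2)\rangle=2d$.

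For the lower bound (Step (b)), assume $sr(P)\le 2d+1-2w$ and pick $B$ evincing it. By Lemma \ref{v1}, $h^1(\mathcal{I}_{A\cup B}(d))>0$, and $\deg(A\cup B)\le(2w+1)+(2d+1-2w)=2d+2$, so Lemma \ref{c0} gives either a line $D$ with $\deg(D\cap(A\cup B))\ge d+2$ or a conic $T$ with $\deg(T\cap(A\cup B))\ge 2d+2$. In the line case, $\deg(\mathrm{Res}_D(A\cup B))\le 2d+2-(d+2)=d$, so $h^1(\mathcal{I}_{\mathrm{Res}_D(A\cup B)}(d-1))=0$ and Lemma \ref{v4} forces $A\subset D$, impossible since $A$ spans $\mathbb{P}^2$. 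In the conic case, degree forces $A\cap B=\emptyset$ and $A\cup B\subset T$ with $\sharp(B)=2d+1-2w$; since $\deg(L_1\cap A)=w+1\ge 3$ and $\deg(L_2\cap A)=w+1$ (counting $O$), Bézout shows $L_1\cup L_2$ is the unique conic through $A$, so $T=L_1\cup L_2$. Then with $B_i:=B\cap L_i$, residuating along $L_1$ gives $\mathrm{Res}_{L_1}(A\cup B)=A_2\cup B_2$ of degree $w+\sharp(B_2)$; if $\sharp(B_2)\le d-w$ this is $\le d$, kill the $h^1$, apply Lemma \ref{v4} to get $A\subset L_1$, contradiction; otherwise $\sharp(B_1)$ is small and residuating along $L_2$ against a truncation $G\subset L_1$ of $A_1$ of degree $w$ gives $\mathrm{Res}_{L_2}(A\cup B)=G\cup B_1$ of degree $\le d$, so Lemma \ref{v4} yields $A\subset L_2$, again a contradiction.

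The main obstacle I expect is getting the existence constant exactly right: in Lemma \ref{o1} the numerology produced $2d+3-2w$, and here shifting $\deg(A_1)$ up by one and targeting $2d+2-2w$ means one of the two residual/dimension inequalities is tight, so the choice of $\sharp(B_1)$ versus $\sharp(B_2)$ and the verification that the relevant span intersection has the predicted dimension (rather than one more or one less) must be done with care, using Lemma \ref{v3} to control $\dim\langle\nu_d(L_i\cup E)\rangle$ on each line. The lower-bound argument is essentially a transcription of Lemma \ref{o1}(b) with $w$ replaced appropriately, the only subtlety being that now $\deg(L_2\cap A)=w+1$ as well (because $O\in L_2$ lies on $A_1$), which is what makes Bézout still apply to pin down $T=L_1\cup L_2$ once $w\ge 2$.
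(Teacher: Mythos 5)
Your proposal is correct and follows essentially the same route as the paper, which literally proves Lemma \ref{o2} by copying the proof of Lemma \ref{o1} and noting only that Bezout still forces $T=L_1\cup L_2$ because now $\deg(L_1\cap A)=\deg(L_2\cap A)=w+1\ge 3$ --- exactly the point you flag at the end. Your self-correction in step (a) lands on the right cardinalities ($\sharp(B_1)=\sharp(B_2)=d-w+1$, so $\deg(A_1\cup B_1)=d+2$ gives the single relation on $\nu_d(L_1)$ and the Grassmann count again yields a line $M$), matching the paper's construction.
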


\begin{proof}
Copy the proof of Lemma \ref{o1}. In step (b2) we have
$T = L_1\cup L_2$, because $\deg (L_1\cap A)=\deg (L_2\cap A) =w+1\ge 3$).
\end{proof}

\begin{lemma}\label{o3}
Fix integers $w\ge 3$, $s\ge 2w$ and assume $d\ge 2s-1$. Fix lines $D, R\subset \mathbb {P}^m$ such that
$D\ne R$ and $D\cap R\ne \emptyset$. Set $\{O\}:= D\cap R$. Let $U$ be the plane spanned by $D\cup R$.
Let $E\subset U$ be a general subset with cardinality $s-2w$. Let $A_1\subset D$ be the
zero-dimensional degree $w$ subscheme of $D$ with $O$ as its support. Fix $O'\in R\setminus \{O\}$ and call $A_2$ the zero-dimensional
subscheme of $R$ with $O'$ as its support and degree $w$. Set $A:= A_1\cup A_2$. There is
$P\in \langle \nu _d(A_1\cup A_2\cup E)\rangle$ such that
$br (P) = s$ and $sr (P) = 2d+3+s-4w$.
\end{lemma}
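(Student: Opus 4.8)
The plan is to build the point $P$ from two competing decompositions and then control all other decompositions via Lemmas \ref{c0} and \ref{v4}, exactly in the spirit of the proof of Lemma \ref{o1}. First I would fix the data: $A = A_1\cup A_2$ with $\deg(A)=2w$, and $E\subset U$ a general set with $\sharp(E)=s-2w$, so that $\deg(A\cup E)=s$. Since $d\ge 2s-1$ we have $s\le (d+1)/2$, so by Remark \ref{v2} any zero-dimensional scheme of degree $\le s$ through a general $P\in\langle\nu_d(A\cup E)\rangle$ is unique; choosing $P$ general with $P\notin\langle\nu_d(Z')\rangle$ for $Z'\subsetneq A\cup E$ forces $br(P)=s$ and shows $A\cup E$ evinces $br(P)$. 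For the symmetric rank side, I would work inside the plane $U$ and the two lines $D,R$: pick $B_1\subset D\setminus\{O\}$ with $\sharp(B_1)=d-w+2$ and $B_2\subset R\setminus\{O'\}$ with $\sharp(B_2)=d-w+1$, so $\deg(A_i\cup B_i)=d+2$ and on the rational normal curve $\nu_d(L_i)$ the span $\langle\nu_d(A_i)\rangle\cap\langle\nu_d(B_i)\rangle$ is a single point $P'$. Using Lemma \ref{v3} to add the general set $E$ (here $\sharp(E)=s-2w\le d$ since $d\ge 2s-1\ge s+2w-1\ge \ldots$), one checks that $\langle\nu_d(A\cup E)\rangle\cap\langle\nu_d(B_1\cup B_2\cup E)\rangle$ is a line $M$ through $P'$; taking $P\in M\setminus\{P'\}$ general gives $P\in\langle\nu_d(B_1\cup B_2\cup E)\rangle$, hence $sr(P)\le 2(d+2-w)+(d+1-w)\ldots$ — more carefully, $\sharp(B_1\cup B_2\cup E)=(d-w+2)+(d-w+1)+(s-2w)=2d+3+s-4w$, giving the upper bound.

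The substantive part is the lower bound $sr(P)\ge 2d+3+s-4w$. Suppose $B$ evinces $sr(P)$ with $\sharp(B)\le 2d+2+s-4w$. By Lemma \ref{v1}, $h^1(\mathcal{I}_{A\cup E\cup B}(d))>0$, and $\deg(A\cup E\cup B)\le s+2d+2+s-4w=2d+2+2(s-2w)$. To apply Lemma \ref{c0} I need this degree $<3d$ and, if $m>2$, the unreduced part bounded by $d$; the unreduced part of $A\cup E\cup B$ has degree $2w\le s\le (d+1)/2\le d$, and $2d+2+2(s-2w)<3d$ follows from $s-2w<(d-2)/2$, i.e. from $d\ge 2s-1$ when $w\ge 2$ (if $w=2$ this is tight; $w\ge 3$ gives room). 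So Lemma \ref{c0} yields either a line $D'$ with $\deg(D'\cap(A\cup E\cup B))\ge d+2$ or a conic $T$ with $\deg(T\cap(A\cup E\cup B))\ge 2d+2$. In the line case, $\deg(\mathrm{Res}_{D'}(A\cup E\cup B))\le (2d+2+2(s-2w))-(d+2)=d+2(s-2w)$; this is $\le d$ only when $s=2w$, so for $s>2w$ I must instead argue that a line can only absorb points of $E$ up to its contribution — here I would use that $E$ is general in $U$, so at most one point of $E$ lies on any line other than $D,R$, and combine with Lemma \ref{v4} applied after first residuating a suitable auxiliary hypersurface (e.g. $D'\cup(\text{a line through the stray }E\text{ points})$) to still reach $A\subset D'$, a contradiction since $A$ is not contained in any line.

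For the conic case: $\deg(A\cup E\cup B)\le 2d+2+2(s-2w)$ and $\deg(T\cap(A\cup E\cup B))\ge 2d+2$ force at most $2(s-2w)$ of the degree off $T$; since $A\cap B=\emptyset$ will follow from a degree count when $s=2w$ and more generally $E$ general means $E\cap T$ is small, the bulk of $A$ lies on $T$. Because $\deg(D\cap A)=\deg(R\cap A)=w\ge 3$, Bezout forces $T\supseteq D\cup R$, hence $T=D\cup R$ as a scheme. Then I would split $B$ along $D$ and $R$ as in step (b2) of Lemma \ref{o1}: if $\sharp(B\cap D)$ is large, residuating $D$ leaves a scheme of degree $\le d$ (using the generality and smallness of $E$), so Lemma \ref{v4} gives $A\subset D$, contradiction; symmetrically for $R$, using the residual divisor $R$ and the length-$(w-1)$ subscheme of $D$ at $O$ as in Lemma \ref{o1}(b2). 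The main obstacle throughout is bookkeeping the extra $E$-points: unlike Lemma \ref{o1}, the residuals no longer automatically have degree $\le d$, so one must repeatedly invoke the generality of $E$ in the plane $U$ (general points impose independent conditions, lie on no unexpected line or conic) and possibly iterate the Horace/residuation argument with a higher-degree auxiliary hypersurface $D$ in Lemma \ref{v4}. I expect the conic subcase with $\sharp(B\cap D)$ intermediate, where neither residuation immediately drops below degree $d$, to be the trickiest, and it is handled by the same two-step residuation (first $L_2$, then $L_1$ with the truncated scheme at $O$) that closes Lemma \ref{o1}.
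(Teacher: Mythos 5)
Your overall strategy (two competing decompositions, Lemmas \ref{v1}, \ref{c0} and \ref{v4}, Bezout to force the conic $D\cup R$, then the two-line analysis of Lemma \ref{o1}) is the paper's strategy, but the step you yourself flag as problematic is exactly where your argument has a genuine gap, and your proposed fix is not the right one. In the line case of the lower bound, $\deg (\mbox{Res}_{D'}(A\cup E\cup B))$ can indeed exceed $d$, so Lemma \ref{v4} does not apply directly; but the remedy is not a ``line through the stray $E$ points'' (for $s-2w\ge 3$ the points of $E$ off $D'$ need not be collinear, and nothing forces $B$ to interact with such a line). The correct move, and the one the paper makes, is a second application of Lemma \ref{c0} (or \cite{bgi}, Lemma 34): since $h^1(\mathcal {I}_{\mbox{Res}_{D'}(W)}(d-1))>0$ and $\deg (\mbox{Res}_{D'}(W))\le 2(d-1)+1$ (this follows from $\deg (W)\le 2d+2+2s-4w$ and $d\ge 2s-1$), there is a second line $D''$ with $\deg (D''\cap \mbox{Res}_{D'}(W))\ge d+1$; then $\deg (\mbox{Res}_{D'\cup D''}(W))\le d-1$, so Lemma \ref{v4} applies to the \emph{conic} $D'\cup D''$ and gives $A_1\cup A_2\subset D'\cup D''$, whence $D'\cup D''=D\cup R$ by Bezout. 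This merges the line case into the conic case instead of treating it separately.

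There is a second gap once $T=D\cup R$ is forced: you try to residuate $D$ or $R$ with $E$ still present, and you correctly worry that neither residual drops below degree $d$. The paper avoids this by first invoking Lemma \ref{v4} together with Remark \ref{v4.0} for $D\cup R$, producing a point $Q\in \langle \nu _d(A\cap (D\cup R))\rangle \cap \langle \nu _d(B\cap (D\cup R))\rangle$ such that $A_1\cup A_2$ evinces $br (Q)$ and $B\setminus E$ evinces $sr (Q)$, with $\sharp (B\setminus E)\le 2d+2-2w$; then one of the two lines meets $B\setminus E$ in at most $d+1-w$ points, the corresponding residual of $(A\cup B)\cap (D\cup R)$ has degree $\le d$, and Lemma \ref{v4} equates the nonreduced degree $w-1$ scheme $\mbox{Res}_R(A_1)$ with a reduced set, contradicting $w\ge 3$. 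Finally, your construction of $P$ is also off: $\langle \nu _d(A\cup E)\rangle \cap \langle \nu _d(B_1\cup B_2\cup E)\rangle$ is not a line but a linear space of dimension $s-2w+1$ (it contains all of $\langle \nu _d(E)\rangle$), and you never verify that a point of it can simultaneously lie in $\langle \nu _d(B_1\cup B_2\cup E)\rangle$ and avoid every $\langle \nu _d(F)\rangle$ with $F\subsetneq A\cup E$, which is what $br (P)=s$ requires. The paper instead takes the point $O$ supplied by Lemma \ref{o1}, chooses $P$ general in $\langle \{O\}\cup \nu _d(E)\rangle$, and proves $br (P)=s$ by a separate argument using Lemmas \ref{v1} and \ref{v3}.
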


\begin{proof}
We will always compute the residual schemes with respect to divisors of $U$. Notice that $A:= A_1\cup A_2\cup E$ is curvilinear
and hence it only has finitely many subschemes. Hence there is $P\in \langle \nu _d(A_1\cup A_2\cup E)\rangle$ such that
$P\notin \langle \nu _d(F)\rangle$ for any $F\subsetneq A_1\cup A_2\cup E$. Since $\deg (A) =s
\le (d+1)/2$, we get $sr (P)=s$ and that $A$ is the only subscheme of $\mathbb {P}^m$ evincing $sb (P)$ (Remark \ref{v2}).

\quad (a) Fix $P\in \langle \nu _d(A_1\cup A_2\cup E)\rangle$ such that $P\notin \langle \nu _d(F)\rangle$ for any $F\subsetneq A_1\cup A_2\cup E$. In this step
we prove that $sr (P) \ge 2d+3+s-2w$. Assume $sr (P) \le 2d+2+s-2w$. By \cite{cs}, Proposition 3.1,
or \cite{lt}, subsection 3.2, there is $B\subset U$ evincing $sr (P)$. Since $A$ is not reduced, we have
$A\ne B$. Hence $h^1(U,\mathcal {I}_{A\cup B}(d)) >0$. Set $W:= A\cup B$. Since $\deg (W) \le 2d+2s-6 < 3d$, either there
is a line $L\subset U$ such that $\deg (L\cap W) \ge d+2$ or there is a conic $T \subset U$ such that $\deg (T\cap W)\ge 2d+2$ (Lemma \ref{c0}).

\quad (a1) Assume the existence of a line $L\subset U$ such that $\deg (L\cap W) \ge d+2$. If $h^1(U,\mathcal
{I}_{\mbox{Res}_L(W)}(d-1)) =0$, then Lemma \ref{v4} gives $A_1\cup A_2\subset L$, absurd. Hence
$h^1(U,\mathcal
{I}_{\mbox{Res}_L(W)}(d-1))>0$. Since $\deg (\mbox{Res}_L(W)) \le 2(d-1)+1$, there is a line $L'\subset U$
such that $\deg (L'\cap \mbox{Res}_L(W))\ge d+1$. Since $\deg (\mbox{Res}_{L\cup L'}(W)) \le d-1$, Lemma
\ref{v4} gives $A_1\cup A_2 \subset L\cup L'$ and $\mbox{Res}_{L\cup L'}(A)
= B\setminus B\cap (L\cup L')$. Since $w\ge 3$, Bezout theorem
gives $L\cup L' = D\cup R$. Hence $E\subset B$ and $B\setminus E \subset D\cup R$. Lemma \ref{v4} and Remark \ref{v4.0} applied to $D\cup R$ give 
the existence of $Q\in \langle \nu _d(A\cap (D\cup R))\rangle$ such that $A_1\cup A_2$ evinces
$br (Q)$, while $B\setminus E$ evinces $sr (Q)$. Since
$\sharp (B\setminus E)\le 2d+2-2w$, either $\sharp ((B\setminus E)\cap D) \le d+1-w$ or $\sharp ((B\setminus E)\cap R)
\le d-w$. First assume  $\sharp ((B\setminus E)\cap D) \le d+1-w$. Since $\deg (\mbox{Res}_R(A_1\cup A_2))
= w-1$, we have $h^1(\mathcal {I}_{\mbox{Res}_R((A\cup B)\cap (D\cup R))}(d-1))=0$.
Hence Lemma \ref{v4} applied to $A\cap (D\cap R)$ and $B\cap (D\cup R)$ gives
$\mbox{Res}_R(A_1\cup A_2) = \mbox{Res}_R(B\cap (D\cup R))$. Since $B$ is reduced,
we get $w\le 2$, a contradiction.

\quad (a2) Now we assume the existence of a conic $T \subset U$ such that $\deg (T\cap W)\ge 2d+2$. Since $\deg (\mbox{Res}_T(W)) \le d-1$, we have
$h^1(\mathcal {I}_{\mbox{Res}
_T(W)}(d-2)) =0$. Hence the case $t=2$ of Lemma \ref{v4} gives $A_1\cup A_2\subset T$
and $B\setminus B\cap T = \mbox{Res}_T(A)$. Since $w\ge 3$, and $A_1\cup A_2\subset T$,
Bezout theorem gives $T = D\cup R$. We work as in step (b1) (notice that in the case $L\cup L' = D\cup R$ we
only used that $\deg ((L\cup L')\cap (A\cup B)) \ge 2d+2$).

\quad (b) In this step we check the existence of $P\in \langle \nu _d(A_1\cup A_2\cup E)\rangle$ such that $P\notin \langle \nu _d(F)\rangle$ for any $F\subsetneq A_1\cup A_2\cup E$
and $sr (P) = 2d+3+s-2w$ and $br (P) =s$. Lemma \ref{o1} gives the existence of $O\in \langle \nu _d(A_1\cup A_2)\rangle$
such that $sr (O) \le 2d+3-2w$ and $O\notin \langle \nu _d(G)\rangle$ for any $G\subsetneq A_1\cup A_2$. Take a general $P\in \langle \{O\}\cup \nu _d(E)\rangle$.
Obviously $sr(P) \le sr (O) +\sharp (E) \le 2d+3+s-2w$. Step (b) gives $sr (P) = 2d+3+s-2w$. Assume $br (P) <s$. Since $br (P) \le d+1$, there is a zero-dimensional
scheme $W\subset \mathbb {P}^m$ such that $\deg (W)=br (P)$, $P\in \langle \nu _d(W)\rangle$ and $P\notin \langle \nu _d(W')\rangle$ for
any $W' \subsetneq W$. First assume $W\nsubseteq A_1\cup A_2\cup F$. Lemma \ref{v1} gives $h^1(\mathcal {I}_{A_1\cup A_2\cup E\cup W}(d)) >0$.
Since $bs (P) +s \le 2d+1$, there is a line $L\subset \mathbb {P}^m$ such that $\deg (L\cap (A_1\cup A_2\cup E\cup W)) \ge d+2$. As in step (b1) we get
a contradiction. Now assume $W\subsetneq A_1\cup A_2 \cup E$. Set $A':= (A_1\cup A_2)\cap W$. 
Lemma \ref{v3} gives $\langle \{P\}\cup \nu _d(E)\rangle \cap \langle \nu _d(A_1\cup A_2)\rangle =\{O\}$ and $\langle \{P\}\cup \nu _d(E')\rangle \cap \langle
\nu_d(A_1\cup A_2)\rangle$ . Since $P\in \langle \nu _d(W)\rangle$ and $P\notin \langle \nu _d(W')\rangle$ for
any $W' \subsetneq W$, the set $\langle \{P\}\cup \langle \nu _d(W\cap E)\rangle \cap \langle \nu _d(A')\rangle$, is a single point. Hence $E\subseteq W$
and $\{O\} \in \langle \nu _d(A')\rangle$, contradicting the choice of $O$.\end{proof}

Quoting Lemma \ref{o2} instead of Lemma \ref{o1} we get the following result.

\begin{lemma}\label{o4}
Fix integers $w\ge 2$ and $s\ge 2w+1$. Assume $d\ge 2s-1$. Fix lines $D, R\subset \mathbb {P}^m$ such that
$D\ne R$ and $D\cap R\ne \emptyset$. Set $\{O\}:= D\cap R$. Let $U$ be the plane spanned by $D\cup R$.
Let $E\subset U$ be a general subset with cardinality $s-2w-1$. Let $A_1\subset D$ be the
zero-dimensional degree $w+1$ subscheme of $D$ with $O$ as its support. Fix $O'\in R\setminus \{O\}$ and call $A_2$ the zero-dimensional
subscheme of $R$ with $O'$ as its support and degree $w$. Set $A:= A_1\cup A_2$. There is
$P\in \langle \nu _d(A_1\cup A_2\cup E)\rangle$ such that
$br (P) = s$ and $sr (P) = 2d+1+s-4w$.
\end{lemma}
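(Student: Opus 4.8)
The plan is to run the proof of Lemma \ref{o3} essentially verbatim, feeding in Lemma \ref{o2} wherever that argument invoked Lemma \ref{o1}, and redoing the arithmetic for $\deg(A_1)=w+1$. First I would record the bookkeeping: $\deg(A_1\cup A_2)=2w+1$, so $A:=A_1\cup A_2\cup E$ is curvilinear of degree $(2w+1)+(s-2w-1)=s\le(d+1)/2$; since no line meets $A$ in $\ge d+2$ points and no conic in $\ge 2d+2$ points, Lemma \ref{c0} gives $h^1(\mathcal I_A(d))=0$, hence $\nu_d(A)$ is linearly independent, and as $A$ is curvilinear it has only finitely many subschemes, so a general $P\in\langle\nu_d(A)\rangle$ lies outside $\langle\nu_d(F)\rangle$ for every $F\subsetneq A$; for such $P$ we have $sr(P)=s$.

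Next I would produce a $P$ with the asserted ranks via the two-line construction. Lemma \ref{o2} supplies $O\in\langle\nu_d(A_1\cup A_2)\rangle$ with $sr(O)=2d+2-2w$ and $O\notin\langle\nu_d(G)\rangle$ for $G\subsetneq A_1\cup A_2$; since $\langle\nu_d(A_1\cup A_2)\rangle\cap\langle\nu_d(E)\rangle=\emptyset$, taking $P$ general on the line $\langle\{O\}\cup\nu_d(E)\rangle$ (which misses $\langle\nu_d(E)\rangle$) gives $P\notin\langle\nu_d(F)\rangle$ for $F\subsetneq A$ and $sr(P)\le sr(O)+\sharp(E)=(2d+2-2w)+(s-2w-1)=2d+1+s-4w$. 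The heart of the matter is the reverse inequality, for which I would copy step (a) of Lemma \ref{o3}. Assuming $sr(P)\le 2d+s-4w$, pick $B\subset U$ evincing $sr(P)$ (\cite{cs}, Proposition 3.1, or \cite{lt}, subsection 3.2). Since $A$ is non-reduced, $A\ne B$, so $h^1(U,\mathcal I_{A\cup B}(d))>0$ by Lemma \ref{v1}, while $\deg(A\cup B)\le 2d+2s-4w<3d$; Lemma \ref{c0} then yields a line $L\subset U$ with $\deg(L\cap(A\cup B))\ge d+2$ or a conic $T\subset U$ with $\deg(T\cap(A\cup B))\ge 2d+2$. In the line case: if $h^1(\mathcal I_{\mathrm{Res}_L(A\cup B)}(d-1))=0$ then Lemma \ref{v4} forces $A_1\cup A_2\subset L$, impossible since $A_1$ and $A_2$ sit on the distinct lines $D,R$; otherwise the bound $\deg(\mathrm{Res}_L(A\cup B))\le 2d-1$ produces (\cite{bgi}, Lemma 34) a second line $L'$ with $\deg(L'\cap\mathrm{Res}_L(A\cup B))\ge d+1$, whence $\deg(\mathrm{Res}_{L\cup L'}(A\cup B))\le d-1$ and the $t=2$ case of Lemma \ref{v4} gives $A_1\cup A_2\subset L\cup L'$. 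In the conic case, $\deg(\mathrm{Res}_T(A\cup B))\le 2s-4w-2\le d-1$, so the $t=2$ case of Lemma \ref{v4} gives $A_1\cup A_2\subset T$ directly. In all cases $A_1\cup A_2$ lies on a conic, and I expect Bezout to pin it down, using the hypothesis $w\ge 2$: since $\deg(A_1\cap D)=w+1\ge 3$ that conic must have $D$ as a component, and since $A_2$ has degree $w\ge 2$ and is supported off $D$, its other component must be $R$; hence the conic is $D\cup R$.

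With $A_1\cup A_2\subset D\cup R$ in hand, Lemma \ref{v4} also gives $E\subset B$ and $B\setminus E\subset D\cup R$ (the general set $E$ misses $D\cup R$), and Remark \ref{v4.0} applied to the conic $D\cup R$ yields $Q\in\langle\nu_d(A_1\cup A_2)\rangle\cap\langle\nu_d(B\setminus E)\rangle$ with $P\in\langle\{Q\}\cup\nu_d(E)\rangle$. I would then argue, as in Lemma \ref{o3}, that $A_1\cup A_2$ evinces $br(Q)$: if $Q\in\langle\nu_d(A')\rangle$ with $A'\subsetneq A_1\cup A_2$ then $P\in\langle\nu_d(A'\cup E)\rangle$, contradicting the choice of $P$; and $br(Q)=2w+1$ follows from Lemmas \ref{v1} and \ref{c0} since $2w+1\le(d+1)/2$. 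Likewise $B\setminus E$ evinces $sr(Q)$. But then $sr(Q)=\sharp(B\setminus E)\le(2d+s-4w)-(s-2w-1)=2d+1-2w<2d+2-2w$, contradicting Lemma \ref{o2} (whose hypotheses hold because $d\ge 2s-1\ge 4w+1$, using $s\ge 2w+1$). This contradiction gives $sr(P)\ge 2d+1+s-4w$, so $sr(P)=2d+1+s-4w$. (In Lemma \ref{o3} the descent to $Q$ is instead packaged as a residual computation with respect to one of the lines $D,R$; either route works, and for the present numerology going through Lemma \ref{o2} is the cleaner one.)

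Finally I would get $br(P)=s$ as in step (b) of Lemma \ref{o3}: clearly $br(P)\le\deg A=s$, and if $br(P)<s$, a scheme $W$ evincing $br(P)$ cannot be a proper subscheme of $A$ (that would contradict the choice of $P$), and if $W\not\subseteq A$ then $h^1(\mathcal I_{A\cup W}(d))>0$ by Lemma \ref{v1}, while $\deg(A\cup W)\le s+br(P)<2s\le d+1$ leaves no line with $\ge d+2$ points nor conic with $\ge 2d+2$ points, so Lemma \ref{c0} forces $h^1(\mathcal I_{A\cup W}(d))=0$, a contradiction. I expect the lower bound on $sr(P)$ to be the only delicate part: the repeated reductions via Lemmas \ref{c0} and \ref{v4} down to the point $Q$ on $\langle\nu_d(A_1\cup A_2)\rangle$, together with the accompanying family of auxiliary vanishings $h^1(\cdots)=0$, are precisely what force the hypotheses $d\ge 2s-1$ and $w\ge 2$ (the latter entering only in the Bezout step that identifies the conic as $D\cup R$).
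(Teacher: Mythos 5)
Your proof is correct and is essentially the paper's own: the paper proves Lemma \ref{o4} in one line by quoting Lemma \ref{o2} in place of Lemma \ref{o1} inside the proof of Lemma \ref{o3}, which is exactly what you carry out, with the arithmetic correctly adjusted for $\deg (A_1)=w+1$ and with the Bezout step correctly relaxed to $w\ge 2$ since $\deg (A_1)=w+1\ge 3$. The only (harmless) divergences are that at the end you apply the lower bound $sr (Q)\ge 2d+2-2w$ of Lemma \ref{o2} directly to the point $Q\in \langle \nu _d(A_1\cup A_2)\rangle \cap \langle \nu _d(B\setminus E)\rangle$ rather than repeating the extra residual computation with respect to $R$ used in step (a1) of Lemma \ref{o3}, and the slip ``$sr (P)=s$'' in your first paragraph should read ``$br (P)=s$''.
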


\vspace{0.3cm}

\qquad {\emph {Proof of Theorem \ref{i1}.}} Since the cases $s=2,3$ are true by \cite{bgi}, Theorems 32 and 37,  we may assume $s\ge 4$ (the case $(s,r) =(3,2d-1)$
does not occur in the statement of Theorem \ref{i1}, because we assumed $r \le 2d+s-7$; this inequality is used in steps (d) and (e) below).

Notice that $ \sigma _s(X_{m,d})\setminus \sigma _{s-1}(X_{m,d}) \ne \emptyset$ (e.g., because  $s(m+1) < \binom{m+d}{m}$). Fix $P\in \sigma _s(X_{m,d})\setminus \sigma _{s-1}(X_{m,d})$ and
write $r:= sr (P)$. Since $P\notin \sigma _{s-1}(X_{m,d})$ we have $r \ge s$. Since $ \sigma _s(X_{m,d})\ne\sigma _{s-1}(X_{m,d})$ a non-empty open subset of $ \sigma _s(X_{m,d})$ is formed by points
with rank $s$. Hence to prove Theorem \ref{i1}  we may assume $r>s$. By Remark \ref{v2} there is a unique
degree $s$ zero-dimensional scheme $A\subset \mathbb {P}^m$ such that $P\in \langle \nu _d(A)\rangle$ and this scheme is smoothable. By Remark \ref{v2} there is no zero-dimensional scheme $A_1\subset \mathbb {P}^m$
such that $\deg (A_1)<s$ and $P\in \langle \nu _d(A_1)\rangle$. Hence $P$ has cactus rank $s$
and $P\notin \langle \nu _d(A')\rangle$ for any $A'\subsetneq A$.
Since $sr (P)=r$ there is a finite set $B\subset \mathbb {P}^m$ such that
$\sharp (B) =r$, $P\in \langle \nu _d(B)\rangle$ and $P\notin \langle \nu _d(B')\rangle$ for any $B'\subsetneq B$. Set $W:= A\cup B$. We have
$\deg (W)\le \deg (A)+\deg (B) =r+s$ and equality holds if and only if $A\cap B=\emptyset$.
Lemma \ref{v1} gives $h^1(\mathcal {I}_W(d))>0$.
Hence $\deg (W) \ge d+2$ (e.g., by \cite{bgi}, Lemma 34). Therefore $\sigma _{s,x}(X_{m,d})
= \emptyset$ if $s+1 \le x \le d-s+1$. 

We have $\sigma _{s,d-s+2}(X_{m,d}) \ne
\emptyset$, because $\sigma _{s,d-s+2}(X_{1,d}) \ne \emptyset$ by a theorem of Sylvester
(\cite{cs}, \cite{lt}, Theorem 4.1, or \cite{bgi}) and for any line $L\subset \mathbb {P}^m$
and any $P\in \langle \nu _d(L)\rangle$ the symmetric rank and the border rank of
$P$ are the same with respect to $X_{m,d}$ or with respect to $\nu _d(L) \cong X_{1,d}$
(\cite{lt}, subsection 3.2).

\quad (a) In this step we prove that $\sigma _{s,r}(X_{m,d}) \ne \emptyset$
for every $r\in \{d-s+3,\dots ,d+s-2\}$ such that $r+s \equiv d \pmod{2}$. Fix $r\in \{d-s+3,\cdots ,d+s-2\}$
such that $r+s \equiv d \pmod{2}$. Set $b:= (d+2+s-r)/2$. Since $r+s \equiv d \pmod{2}$, we have
$b\in \mathbb {Z}$. Since $r \le d+s-2$, we have $b \ge 2$. Since $r\ge d-s+3$, we have
$b < s$. Since $d \ge 2s-2$, we have $r>s$ and hence $2b < d+2$. 
Fix a line $L\subset \mathbb {P}^m$ and a connected zero-dimensional scheme $Z'\subset L$
such that $\deg (Z') = b$. Take any $Q\in \langle \nu _d(Z')\rangle$ such that
$Q\notin \langle \nu _d(Z'')\rangle$ for any $Z''\subsetneq Z'$ ($Q$ exists and the set of all such points $Q$
is a non-empty open subset of a projective space of dimension $\deg (Z')-1$, because
$Z'$ is a divisor of the smooth curve $L$).
Fix any set $E\subset \mathbb {P}^m\setminus L$ such that $\sharp (E) =s-b = (s+r-d-2)/2$. Since
$E\cap L= \emptyset$, we have $\deg (Z'\cup E) =s$. Since $d\ge s-1$, we
have $\dim (\langle \nu _d(Z'\cup E)\rangle )=s-1$. Since $L$ is contained in a smooth curve, $Z'$
is curvilinear. Since $E$ is a finite set, the scheme $Z'\cup E$ is curvilinear. We claim that any zero-dimensional curvilinear subscheme $W\subset \mathbb {P}^m$ has
only finitely many subschemes. Indeed, $W$ is contained in a smooth curve $C$ and hence we may write
$W = \sum _{i=1}^{x} a_iQ_i$ for some $x\in \mathbb {N}\setminus \{0\}$, $a_i\in \mathbb {N}\setminus \{0\}$ and $Q_i\in C$. The subschemes of $W$ are the
effective divisors of $C$ of the form $\sum _{i=1}^{x} b_iQ_i$ for some $b_i\in \{0,\dots ,a_i\}$. Hence $W$ has
exactly $\prod _{i=1}^{x} (a_i+1)$ subschemes. Hence $Z'\cup E$ has only finitely many closed subschemes.
Fix any $O\in \langle \nu _d(Z'\cup E)\rangle$ such that $O\notin \langle \nu _d(F)\rangle$
for any $F\subsetneq Z'\cup E$ ($O$ exists and the set of all such points $O$
is a non-empty open subset of the $(s-1)$-dimensional projective space $\langle \nu _d(Z'\cup E)\rangle$, because $Z'\cup E$ has only finitely many subschemes and $O\notin \langle \nu _d(L\cup E')\rangle$
for any $E'\subsetneq E$). Let $A'\subset L$ be a set evincing
$sr (Q)$ with respect to the rational normal curve $\nu _d(L)$. A theorem of Sylvester gives
$\sharp (A') =d-\deg (Z')+2$ (\cite{cs}, \cite{lt}, Theorem 4.1, or \cite{bgi}). Set $G:= A'\cup E$.
Since $E\cap L=\emptyset$, we have $\sharp (G)=r$.

\quad {\emph {Claim 1:}} $br (O) = s$.

\quad {\emph {Proof of Claim 1:}} We have $O\in \langle \nu _d(Z'\cup E)\rangle$ and
$O\notin \langle \nu _d(F)\rangle$
for any $F\subsetneq Z'\cup E$. Apply Remark \ref{v2}.

\quad {\emph {Claim 2:}} $sr (O) = r$ and $G$ evinces $sr (O)$.

\quad {\emph {Proof of Claim 2:}} Since $P\in \langle \nu _d(G)\rangle$, we have $sr (P)\le r$.
Since $s\le (d+1)/2$, $Z'\cup E$ is the only scheme evincing $br (O)$.
Since $Z'$ is not reduced, we get $sr (P) >s$. Fix any $U\subset \mathbb {P}^m$ evincing $sr (P)$. Since $sr (P) +br (P) \le 2d+1$, $sr (P) >br (P)$ and $Z'\cup E$ evinces
$br (O)$, \cite{bb1}, Theorem 1, gives
the existence of a line $D\subset \mathbb {P}^m$ such that $(Z'\cup E)\setminus D\cap (Z'\cup E)
= U\setminus U\cap D$ and every unreduced connected component of $Z'\cup E$ is contained in $D$.
Since $Z'$ is not reduced, we get $D=L$. Hence $U\setminus U\cap L = E$.
Since $ O\in \langle \nu _d(Z'\cup E)\rangle \subseteq \langle \nu _d(L\cup E)\rangle$, Lemma \ref{v3}
gives 
that $\langle \{O\}\cup \nu _d(E)\rangle \cap \langle \nu _d(L)\rangle$ is a single point (call it $Q'$).
Since $O\in \langle \{Q\}\cup \nu _d(E)\rangle$ and $Q\in \langle \nu _d(L)\rangle$, we have
$Q' = Q$. Lemma \ref{v3} gives $Q\in \langle \nu _d(L\cap U)\rangle$. Since $A'$ evinces
$sr (Q)$, we get $sr (P) = \sharp (U) \ge \sharp (E) +\sharp (A')$, concluding the proof of Claim 2.

The point $O$ shows that  $\sigma _{s,r}(X_{m,d}) \ne \emptyset$ for any $r\in \{d+3-s,\dots ,d+s-2\}$ such that $r+s \equiv d \pmod{2}$.

\quad (b) Fix any integer $r$ such that $r\in \{d-s+3,\dots ,d+s-2\}$ and $r+s \equiv d+1 \pmod{2}$.
In this step we prove that $\sigma _{s,r}(X_{m,d}) =\emptyset$. Assume
the existence of $P\in \sigma _{s,r}(X_{m,d})$. Fix $A\subset \mathbb {P}^m$ evincing
$br (P)$ and $B\subset \mathbb {P}^m$ evincing $sr (P)$. Since $r>s$ we have $A\ne B$.
As in step (a) we get the existence of a line $D\subset \mathbb {P}^m$
such that $\deg ((A\cup B)\cap D) \ge d+2$, every unreduced connected component of
$A$ is contained in $D$ and $\mbox{Res}_D(A) = B\setminus B\cap D$. Set $E:= B\setminus B\cap D$. By Lemma \ref{v3} the set $\langle \nu _d(D)\rangle \cap \langle \nu _d(E)\cup \{P\}\rangle$
is a unique point, $O$, and $O\in \langle \nu _d(A\cap D)\rangle \cap \langle \nu _d(B\cap D)\rangle$. Since $\deg (A) =\deg (A\cap D)+\sharp (E)$ (resp.
$\sharp (B) = \sharp (B\cap D)+\sharp (E)$), $A\cap D$ evinces $br (P)$
(resp. $B\cap D$ evinces $sr (O)$). The quoted theorem of Sylvester gives
$sr (O) +br (O) =d+2$. Hence $s + r = 2\cdot \sharp (E) +sr (O)+br (O) \equiv d \pmod{2}$, a contradiction.

\quad (c) In this step we fix an integer $r$ such that $d+s-1 \le r \le 2d+1-s$. In order to obtain a contradiction we assume $\sigma _{s,r}(X_{m,d}) \ne \emptyset$ and fix $P\in \sigma _{s,r}(X_{m,d})$. Take $A\subset \mathbb {P}^m$ evincing $br (P)$ and $B\subset \mathbb {P}^m$
evincing $sr (P)$. Let $A_1$ be the union of the connected components
of $A$ which are not reduced. Since $r>s$, we have $A_1\ne \emptyset$. Hence
$\deg (A_1) \ge 2$. Lemma \ref{v1} gives $h^1(\mathcal {I}_{A\cup B}(d)) >0$. Since
$\deg (A\cup B)\le s+r \le 2d+1$, there is a line $D\subset \mathbb {P}^m$ such
that $\deg ((A\cup B)\cap D) \ge d+2$ (Lemma \ref{c0} or \cite{bgi}, Lemma 34). Set $E:= B\setminus B\cap D$. Since
$e:= \deg (\mbox{Res}_D(A\cup B)) \le r+s-d-2 \le d$, we have
$h^1(\mathcal {I}_{\mbox{Res}_D(A\cup B)}(d-1))=0$. Hence Lemma \ref{v4} gives
$A_1 \subset D$ and $A\setminus A\cap D = E$, $e = \sharp (E)$ and
$e = \deg (A\setminus A\cap D) \le s-\deg (A_1)$. Since $A$ evinces $br (P)$,
the set $\langle \nu _d(E)\cup \{P\}\rangle \cap \langle \nu _d(A_1)\rangle$
is a unique point, $O$. Sylvester's theorem gives $sr (O)\le d$. Since $P\in \langle
\nu _d(E)\cup \{O\}\rangle$, we get $sr (P) \le e+d \le d+s-2$, a contradiction.

\quad (d) In this step we prove that $\sigma _{s,r}(X_{m,d}) \ne \emptyset$ for every integer $r$ such that
$2d+2-s \le r \le 2d+s-7$ and $r+s \equiv 0\pmod{2}$. Set $b:= (2d+2+s-r)/2$. Since $r+s\equiv 0 \pmod{2}$, we have
$b\in \mathbb {Z}$. Since $r \le 2d+s-7$, we have $b\ge 9/2$ and hence $b \ge 5$. Since $r\ge 2d+2-s$, we have $b \le s$. We may assume $m=2$
(\cite{lt}, subsection 3.2).
Fix a smooth conic $C\subset \mathbb {P}^2$, a connected zero-dimensional scheme $A_1\subset C$ such that
$\deg (A_1)=b$ and a general set $E\subset \mathbb {P}^2\setminus C$ such that $\sharp (E)=s-b$. We have $\sharp (E)\le d-1$.
Set $A:= A_1\cup E$ and $\{O'\}:= (A_1)_{red}$.
Fix $P\in \langle \nu _d(A)\rangle$ such that $P\notin \langle \nu _d(F)\rangle$ for any scheme $F\subsetneq A$ ($P$ exists,
because $A$ is curvilinear and $\dim (\langle \nu _d(A)\rangle )=\deg (A)-1$). Since $s=\deg (A) \le (d+1)/2$, then $br (P)=s$ and $A$ is
the only scheme evincing $br (P)$ (Remark \ref{v2}). Lemma \ref{v3.0} gives that the set $\langle \{P\}\cup E\rangle
\cap \langle \nu _d(C)\rangle$ is a unique point, $O$, that $O\in \langle \nu _d(A_1)\rangle$ and that
$sr (O) =b$. Since $b\le d+1$, the quoted theorem of Sylvester gives $r_{\nu _d(C)}(O) =2d+2-b$. Fix
$B_1\subset C$ such that $\nu _d(B_1)$ evinces $r_{\nu _d(C)}(O)$, i.e. take
$B_1\subset C$ such that $\sharp (B_1) =2d+2-b$, $O\in \langle \nu _d(B_1)\rangle$ and $O\notin \langle \nu _d(F)\rangle$
for any $F\subsetneq B_1$. We have $P\in \langle \nu _d(B_1\cup E)\rangle$. Lemma \ref{v3.0} also gives $P\notin \langle
\nu _d(G)\rangle$ for any $G\subsetneq B_1\cup E$. Since $br (P) =s$, $P\in \langle \nu _d(B_1\cup E)\rangle$ and
$\sharp (B_1\cup E) = r$, to prove that $\sigma _{s,r}(X_{m,d}) \ne \emptyset$ it
is sufficient to prove that $sr (P) \ge r$. Assume $sr (P)<r$ and take $B$ evincing $sr (P)$. We have $h^1(\mathcal {I}_{A\cup
B}(d))>0$ (Lemma \ref{v1}). Since $\deg (A\cup B) \le s+r-1<3d$, either there is a line $D\subset \mathbb {P}^2$ such that
$\deg (D\cap (A\cup B))\ge d+2$
or there is a conic $T$ such that $\deg (T\cap (A\cup B)) \ge 2d+2$ (Lemma \ref{c0}). 

\quad (d1) Here we assume the existence of a line
$D \subset \mathbb {P}^2$ such that $\deg (D\cap (A\cup B)) \ge d+2$. If $h^1(\mathcal {I}_{\mbox{Res}_D(A\cup B)}(d-1))=0$,
then Lemma \ref{v4} gives $A_1\subset D$, a contradiction. Hence $h^1(\mathcal {I}_{\mbox{Res}_D(A\cup B)}(d-1))>0$. Since
$\deg (\mbox{Res}_D(A\cup B))\le r+s-1-d-2\le 2(d-1)+1$, Lemma \ref{c0} or \cite{bgi}, Lemma 34, give the existence
of a line $D'\subset \mathbb {P}^2$ such that $\deg (D'\cap \mbox{Res}_D(A\cup B))\ge d+1$.
Hence $\deg ((A\cup B)\cap (D\cup D')) \ge 2d+3$. Hence 
$\deg (\mbox{Res}_{D\cup D'}(A\cup B)) \le d-1$.
Hence $h^1(\mathcal {I}_{\mbox{Res}_{D\cup D'}(A\cup B)}(d-2))=0$.
Lemma \ref{v4} gives $A_1\subset D\cup D'$. Since $C$ is an irreducible conic containing $A_1$, Bezout
theorem gives $b \le 4$, a contradiction.

\quad (d2) Now assume the existence of a conic $T\subset \mathbb {P}^2$ such that $\deg (T\cap (A\cup B)) \ge 2d+2$.
Since $\deg (\mbox{Res}_T(A\cup B)) \le d-1$, we have $h^1(\mathcal {I}_{\mbox{Res}_T(A\cup B)}(d-2)) =0$. Hence the case $t=2$ of Lemma \ref{v4} gives $A_1\subset T$ and $\mbox{Res}_T(A) = B\setminus B\cap T$. Since $b \ge 5$, $C$ is irreducible and $A_1\subset C$, Bezout theorem
gives $T=C$. We get $B\setminus B\cap C =E$. Hence
$\sharp (B\cap C) \le 2d+1-b$. Hence $\deg (C\cap (A\cup B)) \le 2d+1$, a contradiction.

\quad (e) In this step we prove that $\sigma _{s,r}(X_{m,d}) \ne \emptyset$ for every integer $r$ such that
$2d+3-s \le r \le 2d+s-7$ and $r+s \equiv 1\pmod{2}$ and hence conclude the proof of Theorem \ref{i1}. Set $c:= (2d+3+s-r)/2 $ and $w:= \lfloor c/2\rfloor$.
Since $r+s \equiv 1\pmod{2}$, we have $c\in \mathbb {Z}$. Since
$r \ge 2d+3-s$, we have $c \le s$. Since $r \le 2d+s-7$, we have $c\ge 5$. If $c$ is odd, then we apply Lemma \ref{o4}. If $c$ is even, then we apply Lemma \ref{o3}.\qed

\providecommand{\bysame}{\leavevmode\hbox to3em{\hrulefill}\thinspace}

\end{document}